\RequirePackage{fix-cm}
\documentclass[smallextended]{svjour3}       
\smartqed  
\usepackage{amssymb}
\usepackage{amsmath}
\newcommand{\GKdim}{\mathrm{GKdim}}
\newcommand{\supp}{\mathrm{supp}}
\newcommand{\PI}{\mathrm{PI}}

\newcommand{\Aut}{\mathrm{Aut}}
\newcommand{\N}{\mathbb{N}}
\newcommand{\Z}{\mathbb{Z}}
\newcommand{\F}{\mathbb{F}}
\newcommand{\C}{\mathbb{C}}
\newcommand{\Q}{\mathbb{Q}}

\begin{document}
 \title{On weakly locally finite division rings\thanks{This work is funded by Vietnam National Foundation for Science and Technology Development (NAFOSTED) under Grant No. 101.04-2016.18.}
}
\author{Trinh Thanh Deo\and Mai Hoang Bien\and Bui Xuan Hai}

\authorrunning{T.T. Deo, M. H. Bien, B.X.Hai} 

\institute{Trinh Thanh Deo\at
Faculty of Mathematics and Computer Science, VNUHCM - University of Science,\\ 227 Nguyen Van Cu Str., Dist. 5, Ho Chi Minh City, Vietnam.\\
\email{ttdeo@hcmus.edu.vn}           
\and 
Mai Hoang Bien\at
Faculty of Mathematics and Computer Science, VNUHCM - University of Science,\\ 227 Nguyen Van Cu Str., Dist. 5, Ho Chi Minh City, Vietnam.\\
\email{mhbien@hcmus.edu.vn}           
\and
Bui Xuan Hai\at
Faculty of Mathematics and Computer Science, VNUHCM - University of Science,\\ 227 Nguyen Van Cu Str., Dist. 5, Ho Chi Minh City, Vietnam.\\
\email{bxhai@hcmus.edu.vn}           
}
\date{Received: date / Accepted: date}
\maketitle
 \begin{abstract} 
Weakly locally finite division rings were considered in \cite{dbh}, where it was mentioned that the class of weakly locally finite division rings properly contains  the class of locally finite division rings. In this paper, for any integer  $n\geq 0$ or $n=\infty$, we construct a weakly locally finite  division ring whose Gelfand-Kirillov dimension is $n$. This fact shows in particular that there exist infinitely many weakly locally finite division rings that are not locally finite. Further, for the class of weakly locally finite division rings, we investigate some questions related with the well-known Kurosh Problem and with one of Herstein's conjectures. 
\keywords{division rings\and weakly locally finite\and Gelfand-Kirrilov dimension\and linear groups.}
\subclass{16K40\and 16P90.}
\end{abstract}

\section{Introduction}
In the theory of algebras, it is well-known that an algebra is locally finite iff its Gelfand-Kirillov dimension ($\GKdim$ for short) is $0$ \cite{kra-len}. Recall that the class of weakly locally finite division rings considered in \cite{dbh} is a natural generalization of the class of  locally finite division rings.  As we will see in the text, a division ring is weakly locally finite iff it is PI. In 1996, Zhang \cite[Example 5.7]{zhang} gave the example of a locally PI division ring whose $\GKdim$~is~$2$. Therefore, the class of weakly locally finite division rings properly contains the class of locally finite division rings. In this paper, using Zhang's idea, we construct the example of a locally PI division ring with $\GKdim=n\ge 1$ or $\infty$. 
Here, we use directly Mal'cev Neumann's construction of the division ring of the free abelian group $G$ of countable rank over some suitable base field $K$ with respect to a certain group morphism $\Phi : G\to \Aut(K)$. Hence, we show in particular that there exist infinitely many weakly locally finite division rings that are not locally finite.
Some readers of this manuscript called our attention to an old but unpublished example of  J. C. McConnell on division ring with arbitrary Gelfand-Kirillov dimension (cf. \cite{McConnell}). From our discussion with J. C. McConnell and his colleagues we felt  that it is worth to have more examples on division rings of arbitrary predescribed Gelfand-Kirillov dimension.

Further, we study some questions related with  the Kurosh Problem for division rings. Recall that in 1941, Kurosh \cite[Problem R]{Kurosh} asked if a finitely generated algebraic algebra is necessarily a finite dimensional vector space over a base field. Equivalently, the Kurosh Problem for division rings asked if an algebraic division ring is locally  finite. At the present,  this problem remains still unsolved in general, but it is answered in the affirmative for several special cases of a division ring $D$ with the center $F$.  In particular, it is the case: for $F$ finite \cite{lam}, and for $F$ having only finite algebraic field extensions (in particular for $F$ algebraically closed). For instance, the last case follows from the Levitzki-Shirshov theorem which states that {\em any algebraic algebra of bounded degree is locally finite} (see e.g. \cite{dren}, \cite{kha}). For an  additional information about this problem we refer to \cite{kha}, \cite{smok4} and \cite{zelmanov}. In the present paper, we investigate the Kurosh Problem and some its weaker versions for matrix rings over a division ring. In particular, we prove that the Kurosh Problem is answered in the affirmative for weakly locally finite division rings.

Also, we consider a conjecture posed by Herstein. In fact, in 1978, I.N. Herstein  \cite[Conjecture 3]{her} conjectured that given a subnormal subgroup $N$ of the multiplicative group $D^*$ of a division ring $D$, if $N$ is radical over the center $F$ of $D$, then $N$ is central, i. e. $N$ is contained in $F$. Herstein, himself in the cited above paper proved this fact for the special case, when $N$ is torsion group. However, the problem remains  still open in general. In \cite{hai-huynh}, it was proved that  this  conjecture is  true in the finite dimensional case. In this paper, we  shall prove that this conjecture  is also true for weakly locally finite division rings. 

\section{The Gelfand-Kirillov dimension of an algebra}

Let $A$ be an algebra over a field $k$ and $V$ be a subspace of $A$ containing the identity  $1_A$ of $A$. Assume that  $V$ is generated by elements $a_1, a_2, \ldots, a_n$. For any integer $ r\geq 2$, $V^r$ denotes the subspace of  $A$ generated by all monomials  $a_{i_1} a_{i_2} \ldots a_{i_r}$ of length $r$, where $a_{i_j}\in \{a_1, a_2, \ldots, a_n\}$. If $\sum_{r\geq 1}V^r=A$, then we say that $V$ is a {\em subframe} of $A$. The Gelfand-Kirillov dimension of $A$ over $k$, denoted by $\GKdim_k(A)$, is defined by the following formulae
$$\GKdim_k(A):=\sup\limits_V \overline{\lim\limits_{r\to\infty}}\log_r \dim_k V^r,$$
where $V$ runs over the set of all subframes of $A$. The basic properties of the Gelfand-Kirillov dimension can be found in \cite{kra-len}.
One can show that $A$ is locally finite if and only if $\GKdim_k(A)=0$. Also, it is known that every positive integer can occur as the Gelfand-Kirillov dimension of some commutative algebra. Moreover, for every real number  $r\geq 2$, there always exists some $k$-algebra $A$ with $\GKdim_k(A)=r.$ Bergman \cite{berg} proved that there does not exist any $k$-algebra having Gelfand-Kirillov dimension in the open interval $(1,2)$.

For a division ring $D$, the Gelfand-Kirillov dimension of $D$ is understood to be the Gelfand-Kirillov dimension of an algebra $D$ over its center $F$.

Recall that in \cite{zhang}, Zhang defined the Gelfand-Kirillov transcendence degree of an algebra $A$ over a field $k$ by $$\mathrm{Tdeg}_kA = \sup \limits_V \inf\limits_b \overline{\lim\limits_{n \to \infty}}{\log_n}{\dim_k}({(k + bV)^n}),$$ where $V$ ranges over subframes of $A$ and $b$ ranges over $A^*=A\backslash\{0\}$. 

\section{Weakly locally finite division rings}

Let $D$ be a division ring with center $F$. Recall that $D$ is \emph{centrally finite} if $D$ is a finite dimensional vector space over $F$. If for every finite subset $S$ of $D$, the division subring $F(S)$ generated by $S$ over $F$ is a finite dimensional vector space over $F$ then $D$ is called {\it locally finite}.  We begin  with the  observation that in a centrally finite division ring, every division subring is itself centrally finite. Using this fact, it is easy to show that in a locally finite division ring, every finite subset generates a centrally finite division subring. Motivating by this observation, we have introduced the following notion.

\begin{definition}\label{def:2.1} 
We say that a division ring $D$  is  \emph{weakly locally finite} if  for every finite subset $S$ of $D$, the division subring  generated by $S$ in $D$ is  centrally finite.
\end{definition}

It follows immediately from the definition that a locally finite division ring is weakly locally finite. In the sequent, we shall show that there exist infinitely many weakly locally finite division rings that are not locally finite. Recall that an algebra $A$ over a field $k$ is said to be a \emph{locally $\PI$ algebra} if every finitely generated subalgebra of $A$ is a PI algebra. It turns out that weakly locally finite division rings are exactly locally PI division rings (regarding as algebras over their centers) as the following theorem shows.

\begin{theorem}\label{thm:2.2} A division ring is weakly locally finite if and only if it is \mbox{locally $\PI$.}
\end{theorem}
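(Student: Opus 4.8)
The plan is to prove the two implications separately, and both directions hinge on the relationship between being centrally finite (finite-dimensional over the center) and satisfying a polynomial identity.

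\medskip

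\noindent\textbf{The direction ``weakly locally finite $\Rightarrow$ locally PI''.}
Let $D$ be weakly locally finite with center $F$, and let $A$ be a finitely generated $F$-subalgebra of $D$, say generated by a finite set $S$. Consider the division subring $D_S := F(S)$ generated by $S$ over $F$ inside $D$; by hypothesis $D_S$ is centrally finite, i.e.\ $[D_S : Z(D_S)]$ is finite, say equal to $m$. By the Kaplansky--Amitsur theory (or simply the classical fact that a division ring of dimension $m$ over its center satisfies the standard identity $s_{2\lfloor\sqrt m\rfloor}$), $D_S$ is a PI division ring. Since $A \subseteq D_S$ and PI-ness is inherited by subalgebras, $A$ is PI. Hence $D$ is locally PI. This direction is essentially immediate once one invokes that centrally finite $\Rightarrow$ PI.

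\medskip

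\noindent\textbf{The direction ``locally PI $\Rightarrow$ weakly locally finite''.}
Let $D$ be locally PI, let $S \subseteq D$ be finite, and set $\Delta = F(S)$, the division subring generated by $S$ over $F$. We must show $\Delta$ is centrally finite. The key point is that $\Delta$ is generated \emph{as a division ring} by finitely many elements, but a priori not finitely generated as an algebra; nonetheless I claim $\Delta$ is a PI division ring. For this, consider the $F$-subalgebra $A = F[S]$ generated by $S$; it is finitely generated, hence PI by hypothesis, so it satisfies some multilinear identity of degree $d$, and by Kaplansky's theorem its PI-degree is bounded, giving $\dim_{Z(A)} A \le (d/2)^2$ wherever $A$ is primitive — more usefully, $A$ embeds in $M_k(K)$ for a suitable field $K$ with $k = \lfloor d/2 \rfloor$ via the theory of the ring of quotients, since $A$ is a domain and PI, hence (Posner's theorem) its ring of quotients $Q(A)$ is a finite-dimensional central simple algebra over the field $Z(Q(A))$, with $\dim \le k^2$. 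But $Q(A) \subseteq \Delta$ since $\Delta$ is a division ring containing the domain $A$, and in fact $Q(A) = \Delta$ because $Q(A)$ is already a division ring (being finite-dimensional over a field and a domain) containing $S$, hence equal to $F(S) = \Delta$. Therefore $\Delta = Q(A)$ is finite-dimensional over its center, i.e.\ centrally finite.

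\medskip

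\noindent The main obstacle I anticipate is the careful handling of localization in the second direction: one must verify that the (Ore/central) localization of the finitely generated PI-domain $A$ exists inside $D$, that it coincides with $F(S)$, and that Posner's theorem indeed forces it to be finite-dimensional over its center. PI-domains are Ore (a standard consequence of the PI structure theory), so the localization is unproblematic; the identification $Q(A) = F(S)$ is the step requiring the most attention, since it uses that any division subring of $D$ containing $S$ must contain all the quotients appearing in $Q(A)$, together with the observation that $Q(A)$ is \emph{already} a division ring and hence minimal. Once these points are in place, invoking Posner's theorem (a finitely generated — even just Noetherian, which $A$ is by the Artin--Tate / PI-Noetherian machinery, or one may avoid this and use that any prime PI ring has a central simple ring of quotients) completes the argument.
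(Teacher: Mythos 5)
Your argument for the direction ``locally PI $\Rightarrow$ weakly locally finite'' is essentially the paper's: $F[S]$ is a finitely generated PI domain, hence Ore, Posner's theorem makes its quotient division ring finite-dimensional over its center, and that quotient ring is identified with $F(S)$ inside $D$. The genuine gap is in the other direction, and it comes from misreading Definition~\ref{def:2.1}: weak local finiteness requires that the division subring $L=\langle S\rangle$ generated by $S$ \emph{alone} (not over $F$) be centrally finite. This $L$ need not contain $F$, and its center $Z(L)$ need not contain $F$, so the hypothesis does \emph{not} say that $F(S)$ is centrally finite --- yet that is exactly what you assert (``by hypothesis $D_S$ is centrally finite''). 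The statement that $F(S)$ is centrally finite for every finite $S$ is essentially a consequence of the theorem itself (it follows from local PI via your own Posner argument), so assuming it makes the forward implication circular rather than immediate. The paper's proof of this direction is precisely the work needed to bridge the gap: choose a $Z(L)$-basis $x_1,\dots,x_n$ of $L$, let $a_{ijk}\in Z(L)$ be the structure constants expressing the products $s_is_j$, let $K$ be the (commutative) division subring generated by $F$ and the $a_{ijk}$, and observe that $H=K[S]$ contains $F[S]$, has $K$ in its center, and is finite-dimensional over $K$; hence $H$ embeds in $M_m(K)$, is PI, and so is $F[S]$. Alternatively you could repair your argument by noting that $L$, being centrally finite, satisfies a suitable standard identity, which is multilinear; since $F$ is central in $D$ and the monomials in $S$ span $F[S]$ over $F$, that multilinear identity passes to $F[S]$. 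Either way, an argument is required where you currently write ``by hypothesis.''

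In the converse direction, the same definitional point means you are not quite finished once $\Delta=F(S)=Q(A)$ is shown to be centrally finite: per Definition~\ref{def:2.1} you must conclude that $\langle S\rangle$ is centrally finite, which follows at once from the observation recorded at the start of Section 2 (every division subring of a centrally finite division ring is itself centrally finite) applied to $\langle S\rangle\subseteq F(S)$; the paper states this last step explicitly. Your treatment of the localization issues is correct: a PI domain is Ore, and the universal property of its quotient division ring gives the embedding $Q(A)\hookrightarrow D$ whose image is the division subring generated by $A$, hence equals $F(S)$. (Your aside that $A$ is Noetherian via Artin--Tate is neither needed nor true in general for finitely generated PI algebras; the prime-PI/Posner route you also mention is the right one.)
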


\begin{proof} Let $D$ be division ring with center $F$.
	
$(\Rightarrow)$ Assume that $D$ is weakly locally finite. For any finite subset $S$ of $D$, we have to prove that the subring $F[S]$ of $D$ generated by $S$ over $F$ is a $\PI$ algebra. Indeed, the division subring $L$ of $D$ generated by $S$ is centrally finite. Let $S=\{s_1, s_2, \ldots, s_t\}$, and  $\mathcal{B}=\{x_1, x_2, \ldots, x_n\}$ be a basis of $L$ over its center $Z(L)$. For any $1\leq i, j\leq t$, write $s_{i}s_j= a_{ij1} x_1 + a_{ij2}x_2 + \ldots + a_{ijn}x_n,$ where $a_{ijk}\in Z(L)$. Then, the  division subring $K$ of $D$ generated by $F$ and all $a_{ijk}$  is a subfield of $D$.
	
Put $H=K[S]$. Then, $H$ is a subring of $D$ containing $F[S]$, the field $K$ is contained in the center of $H$ and $H$ is a finite dimensional vector space over $K$. Hence,  $H$ can be considered as a subring of the matrix ring $M_m(K)$ with $m=\dim_KH$. Since $M_m(K)$ is a PI algebra, $H$ is a PI algebra as well. Therefore, $F[S]$ is a PI algebra.

$(\Leftarrow )$ Assume that $D$ is a locally PI algebra and $S$ is a finite subset of $D$. Then, $F[S]$  is a PI $F$-algebra. By \cite[Theorem 5.6]{zhang}, $F[S]$ is an  Ore domain. In view of the Posner Theorem \cite[Theorem 6.1.11]{bei},  the division subring $F(S)$ of $D$ generated by $S$ over $F$ is centrally finite, so is every its division subring. In particular,  $\langle S\rangle$ is centraly finite. Therefore, $D$ is weakly locally finite.
\end{proof}

From \cite[Theorem 5.6]{zhang} and Theorem \ref{thm:2.2}, it follows that if $D$ is a weakly locally finite division ring with center $F$, then $\mathrm{Tdeg}_FD=\GKdim_FD$. Zhang have noted \cite[Page 2871]{zhang} that it is unknown if there exists a division ring $D$ with center $F$ such that  $\mathrm{Tdeg}_F D$ is a non-integer. The following proposition shows that there are no weakly locally finite division ring whose Gelfand-Kirillov dimension is non-integer.

\begin{proposition} Let $D$ be a weakly locally finite division ring with center $F$. Then,  $\GKdim_FD\in \N$ or $\GKdim_FD= \infty$.
\end{proposition}

\begin{proof} By Theorem \ref{thm:2.2}, $D$ is a locally PI algebra. By a remark in \cite[Page 14]{kra-len}, 
$$\GKdim_FD=\max\{\GKdim_FB\mid B\subseteq D, B \text{ is finitely generated over } F\}.$$ 
Therefore, to prove the proposition, it suffices to show that any $F$-subalgebra $B$ of $D$ generated by a finite subset $S$, the Gelfand-Kirillov dimension $\GKdim_FB$ is an integer. Indeed, since $D$ is a locally PI $F$-algebra, $B$ is a PI $F$-algebra. Now, in view of \cite[Theorem 10.5]{kra-len}, $\GKdim_FB$ is an integer. 
\end{proof}

Further, in this section, for a given $n$ which is either non-negative integer or  $n=\infty$, we construct a weakly locally finite division ring whose $\GKdim$ is $n.$ The case $n=0$ is trivial because there exists a vast number of locally finite division rings.

\begin{remark} 
	
In Subsection 3.2 and Subsection 3.3, we construct the examples of a locally PI  division rings with $\GKdim=n\ge 2$ or $\infty$  respectively. However, this idea does not work for the case of a division ring with $\GKdim=1$. That is the reason why   in Subsection~3.1, we construct separately the example of a division ring with $\GKdim=1$.
\end{remark}

\subsection{Weakly locally finite division ring with the  Gelfand-Kirillov dimension one}

The case $n=1$ is more complicated than the cases $n\geq 2$ or $n=\infty$, so we consider it separately. Now we fix some notation.

Let $k$ be a field of characteristic $p\ne 2$ and $Q=k(\lambda)$ the quotient field of the polynomial ring $k[\lambda]$ in one indeterminate $\lambda$. Denote by $\overline Q$ an algebraic closure of $Q$. It is well known that $k[\lambda]$ contains an infinite set of prime elements. Let $\{p_i\mid i\in \N\}$ be a sequence of distinct prime elements. For any $i\in \N$, the polynomial $t^2-p_i$ are irreducible in $Q[t]$.
Denote by $\sqrt{p_i}$ a root of the algebraic equation $t^2-p_i=0$ in $\overline Q$.  
For $n\in \N$ let $K_n=Q(\sqrt{p_1}, \ldots, \sqrt{p_n})$ and $K=\bigcup_{n\in\N} K_n$. It is well-known by a result of Besicovitch (in \cite{besic}, 1940), rediscovered by both Mordell (in \cite{mordell}, 1953) and Siegel (in \cite{siegel}, 1972) (cf. also website of Math Stack Exchange\footnote{https://math.stackexchange.com/questions/30687/the-square-roots-of-different-primes-are-linearly-independent-over-the-field-of}) that $K_n/Q$ and $K/Q$ are Galois extensions whose Galois groups are isomorphic to $(\Z/2\Z)^n$ and $(\Z/2\Z)^{\N}$, respectively.
Therefore there exists for each $i\in \N$ the $Q$-automorphism $f_i$ of $K$ such that:
$$f_i(\sqrt{p_i})=-\sqrt{p_i};\quad \text{and } \ f_i(\sqrt{p_j})=\sqrt{p_j}\quad \text{for any }j\neq i.$$ 
Moreover, the set 
$$\big\{\sqrt{p_{i_1} p_{i_2} \ldots p_{i_n}}\mid i_1 < i_2<\ldots<i_n, n\in \N\big\}$$
is obviously a $Q$-base of $K$ and $Q$ is the fixed subfield of $K$ under all $f_i=f_i^{-1}, i\in\N$. 

Let $G=\oplus_{n\in \N} \Z$ be the free abelian group of countable infinite rank linearly ordered in the obvious manner. Thus elements of $G$ are multiindices $x=(n_1, n_2,\ldots); n_i \in\Z$ and $x< y=(m_1, m_2, \ldots)\iff n_k<m_k$ if $k$ is the first index such that $n_k\neq m_k$.
Moreover, every element $x=(n_1, n_2,\ldots)$ in $G$ is written uniquely in the form $x=\sum n_i x_i$, where $x_i= (0, \ldots, 0, 1, 0, \ldots)$ is the element of $G$ with  $1$ in the $i$-th position and $0$ elsewhere.

For any  $x=(n_1, n_2, \ldots)=\sum n_i x_i\in G$, define $\Phi: G\to \mathrm{Gal}(K/Q), x\mapsto \Phi_x$ be a group morphism generated by $\Phi_{x_i}:=f_i, i\in \N$. The proof of the following lemma is elementary.

\begin{lemma}\label{l4.2}  	 
\begin{enumerate}
  \item $\Phi(x)=\prod\limits_{i}f_i^{n_i}$. 
  \item $x_i\sqrt{p_i}=-\sqrt{p_i}x_i$ and  $x_j\sqrt{p_i}=\sqrt{p_i}x_j$ for $j\neq i$.
  \item $xa=\Phi_x(a) x$ for any $a\in K$.
\end{enumerate}
\end{lemma}

For the convenience, from now on we write the operation in  $G$ multiplicatively. For $G$ and $K$ as above, consider formal sums of the form
$$\alpha=\sum\limits_{x\in G} a_x x,\quad a_x\in K.$$
For  such an $\alpha$, the subset $\supp(\alpha)=\{x\in G: a_x\neq 0\}$ of $G$ is called the {\it support} of $\alpha$. Put
$$D=K((G,\Phi)):=\Big\{\alpha=\sum\limits_{x\in G} a_x x, a_x\in K \mid \supp(\alpha) \text{ is  well-ordered }\Big\}.$$ 
For  $\alpha=\sum\limits_{x\in G} a_x x$ and $\beta=\sum\limits_{x\in G} b_x x$ from $D$, 
define 
$$\alpha+\beta=\sum\limits_{x\in G} (a_x+b_x) x,$$
and  
$$\alpha\beta=\sum\limits_{z\in G} \Big(\sum\limits_{xy=z}a_x \Phi_x(b_y)\Big) z.$$
Then,  $D=K((G,\Phi))$ is a division ring (we refer to \cite[(14.21), p. 231]{lam} for details). 
The construction of these division rings followed the method raised in \cite{neumann}. Division rings obtained by such a way as in \cite{neumann} are now often referred as Mal'cev-Neumann division rings. The following proposition describes the center of the division ring we just defined above. 

\begin{proposition}\label{p4.3}
Let $D=K((G,\Phi))$ be as above. Put $H=\{x^2\mid x\in G\}$ and 
$$Q((H))=\Big\{\alpha=\sum\limits_{x\in G} a_x x\in K((G,\Phi))\mid \supp(\alpha) \subseteq H \text{ and } a_x\in Q\Big\}.$$ 
Then $Q((H))$ is the center of $D$.
\end{proposition}

\begin{proof}
Since $G$ is abelian, $H$ is a subgroup of $G$. Moreover, it is easy to check that for every $x\in H$, $\Phi_x=Id_K$. 
Denote by $F$ the center of $D$. We have to show that $F=Q((H)).$ Observe that every element $\alpha\in Q((H))$ can be written in the form  $\alpha =\sum\limits_{x\in H} a_x x$, where $a_x\in Q$. Now, suppose that $\alpha=\sum\limits_{x\in H} a_x x \in Q((H))$. Then, for every  $\beta=\sum\limits_{y\in G} b_y y \in D$, we have $\Phi_x (b_y)= b_y$ and $\Phi_y (a_x)= a_x$. Hence
\begin{eqnarray*}
\alpha\beta&=&\sum\limits_{z\in G} \Big(\sum\limits_{xy=z}a_x \Phi_x(b_y)\Big) z
=\sum\limits_{z\in G} \Big(\sum\limits_{xy=z}a_x b_y\Big) z,\\
\beta\alpha&=&\sum\limits_{z\in G} \Big(\sum\limits_{xy=z}b_y \Phi_y(a_x)\Big) z
=\sum\limits_{z\in G} \Big(\sum\limits_{xy=z}a_x b_y\Big) z.
\end{eqnarray*} 
Thus, $\alpha\beta=\beta\alpha$ for every $\beta\in D$, so $\alpha \in F$.

Conversely, suppose that  $\alpha=\sum\limits_{x\in G} a_x x \in F.$
Denote by  $S=\supp(\alpha)$. Then, it suffices to prove that  $x\in H$ and $a_x\in Q$ for any $x\in S$. 
In fact, since $\alpha \in F$, we have $\sqrt{p_i}\alpha=\alpha \sqrt{p_i}$ and $\alpha x_i =x_i\alpha$ for any $i\geq 1$, i.e.
$\sum\limits_{x\in S} \sqrt{p_i}a_x x=\sum\limits_{x\in S} \Phi_x(\sqrt{p_i})a_x x$ and $\sum\limits_{x\in S} a_x (x x_i)=\sum\limits_{x\in S} \Phi_{x_i}(a_x) (x_i x).$
Therefore, by Lemma~\ref{l4.2}, we have $\sqrt{p_i}a_x =\Phi_x(\sqrt{p_i})a_x=(-1)^{n_i}\sqrt{p_i}a_x$ and $a_x=\Phi_{x_i}(a_x)=f_i(a_x)$ for any $x=x_1^{n_1}x_2^{n_2}\ldots x_t^{n_t}\in S$.
From the first equality it follows that  $n_i$ is even for any  $i\geq 1$. Therefore $x\in H$. From the second equality it follows that $a_x=f_i (a_x)$ for any $i\geq 1$, hence $a_x\in Q$. Therefore $\alpha \in Q((H)).$
Thus,  $F=Q((H)).$
\end{proof}

\begin{lemma}\label{l4.4} Let $D=K((G,\Phi))$ be as above and let $\gamma = x_1^{-1}+x_2^{-1}+\ldots$ be an infinite formal sum. Then $\gamma$ is an element of $D$ which is not algebraic over the center $F=Z(D)$.
\end{lemma}
\begin{proof}
By Proposition~\ref{p4.3}, $F=Q((H))$. Since  $x_1^{-1}<x_2^{-1}<\ldots,  \supp(\gamma)$ is  a well-ordered subset of $G$, hence $\gamma\in D$. To prove that $\gamma$ is not algebraic over $F$, it suffices to show that  $\{\gamma^i\mid i\ge 0\}$ is an independent set over $F$. Indeed, assume that $\{\gamma^i\mid i\ge 0\}$ is dependent over $F$,  and $n$ is the smallest integer such that there exist $a_1,a_2,\ldots,a_n\in F$ with $a_n\ne 0$ and 
$$a_0+a_1\gamma+a_2\gamma^2+\ldots+a_n\gamma^n=0.\eqno(2)$$
Note that $X=x_1^{-1}x_2^{-1}\ldots x_n^{-1}$ does not appear in the expressions of 
$$a_0, a_1\gamma, a_2\gamma^2, \ldots, a_{n-1}\gamma^{n-1}$$
and the coefficient of  $X$ in the expression of $\gamma^n$ is  $n!$. Therefore, from the equality $(2)$, $a_n.n!X=0$. It follows that  $a_n=0$, a contradiction, and the proof of the lemma is now complete.
\end{proof}

Now, we show that $D=K((G,\Phi))$ contains a division subring which is weakly locally finite and its Gelfand-Kirillov dimension is $1$.
\begin{theorem}\label{t4.5} Let $D=K((G,\Phi))$ and $\gamma$ be as in Lemma~\ref{l4.4}. For any $n\geq 1$, denote by 
$$R_n=F(\sqrt{p_1},\sqrt{p_2},\ldots, \sqrt{p_n}, x_1, x_2,\ldots, x_n, \gamma),$$
and  $R_\infty=\bigcup\limits_{n=1}^\infty R_n,$ where $F$ is the center of $D$. Then, $R_\infty$ is a weakly locally finite division ring and $\GKdim(R_\infty)=1$.
\end{theorem}
\begin{proof} 
By Proposition~\ref{p4.3}, $F=Q((H))$. First, we prove that $R_n$ is centrally finite for each positive integer $n$.
Consider the element  $$\gamma_n=x_{n+1}^{-1}+x_{n+2}^{-1}+\ldots \quad \text{(infinite formal sum)}.$$ 
Since $\gamma_n =\gamma -(x_1^{-1}+x_2^{-1}+\ldots+x_n^{-1})$, we conclude that $\gamma_n\in R_n$ and 
$$F(\sqrt{p_1},\sqrt{p_2},..., \sqrt{p_n},x_1,x_2,... , x_n, \gamma)=F(\sqrt{p_1},\sqrt{p_2},..., \sqrt{p_n},x_1,x_2,..., x_n, \gamma_n).$$
Note that $\gamma_n$ commutes with all  $\sqrt{p_i}$ and all $x_i$ (for $i=1,2,...,n$).  Therefore
\begin{eqnarray*}
R_n&=&F(\sqrt{p_1},\sqrt{p_2},\ldots, \sqrt{p_n},x_1,x_2,\ldots,x_n, \gamma_n)\\
&=&F(\gamma_n)(\sqrt{p_1},\sqrt{p_2},\ldots, \sqrt{p_n},x_1,x_2,\ldots, x_n).
\end{eqnarray*}
In view of the equalities  
$$(\sqrt{p_i})^2=p_i, x_i^2\in F, \, \sqrt{p_i}x_j=x_j\sqrt{p_i}, i\ne j,\, \sqrt{p_i}x_i=-x_i\sqrt{p_i},$$
it follows that every element $\beta$ from  $R_n$ can be written in the form 
$$\beta = \sum\limits_{0\leq \varepsilon_i, \mu_i\leq 1} a_{(\varepsilon_1, ..., \varepsilon_n,  \mu_1, ..., \mu_n)} (\sqrt{p_1})^{\varepsilon_1}\ldots (\sqrt{p_n})^{\varepsilon_n} x_1^{\mu_1} \ldots x_n^{\mu_n},$$
where $ a_{(\varepsilon_1, ..., \varepsilon_n\mu_1, ..., \mu_n)}\in F(\gamma_n).$ Hence $\beta$ is in the center of $R_n$ if and only if $\beta\in F(\gamma_n)$. This means that $F(\gamma_n)$ is the center of $R_n$. Moreover,  $R_n$ is a vector space over $F(\gamma_n)$ having the finite set $B_n$ which consists of the products 
$$(\sqrt{p_1})^{\varepsilon_1}\ldots (\sqrt{p_n})^{\varepsilon_n} x_1^{\mu_1} \ldots x_n^{\mu_n}, 0\le \varepsilon_i, \mu_i\le 1$$
as a basis. Thus, $R_n $ is centrally finite. For any finite subset $S\subseteq R_\infty$, there exists  $n$ such that $S\subseteq R_n$. Therefore, the division subring of $R_\infty$ generated by $S$ over $F$ is contained in $R_n$, which is centrally finite. Thus, $R_\infty$ is weakly locally finite. Now, we claim that the center $Z(R_\infty)$ of $R_\infty$ is $F$. Since $F\subseteq Z(R_\infty)$, it suffices to show that $Z(R_\infty)\subseteq F$. Indeed, if $a\in Z(R_\infty)$ then $a$ commutes with all $x_i$ and $\sqrt{p_i}$ for all $i\ge 1$. Hence, $a$ commutes with all elements of $G$ and $K$, which implies that $a\in F$. 
	
Now, we prove that $\GKdim(R_\infty)=1$. By \cite[Lemma 5.4]{zhang}, it suffices to prove that $\GKdim_FR'_n=1$,  where $R'_n=F[\gamma, \sqrt{p_1},\ldots, \sqrt{p_n},x_1,\ldots, x_n]$ is the subring of $R_n$ generated by 
$$\gamma, \sqrt{p_1},\ldots, \sqrt{p_n},x_1,\ldots, x_n$$ 
over $F$. Let $$V=\langle 1, \gamma, \sqrt{p_1},\ldots, \sqrt{p_n},x_1,\ldots, x_n \rangle_F$$ be the vector subspace of $R'_n$ generated by elements $1$, $\gamma$, $\sqrt{p_1}$, $\ldots$, $\sqrt{p_n}$, $x_1$, $\ldots$, $x_n$ over $F$. Then $\sum_{r\ge 1}V^r=R'_n$, which implies that $V$ is a subframe of $R'_n$. We claim that $\dim_FV^r=M+4^nr$ for some $M$ and $r$ sufficiently large. Indeed, for any $r>2n$, in view of  the relations between  $\sqrt{p_i}$ and $x_j$ for any $1\le i,j\le n$: $x_i\sqrt{p_i}=-\sqrt{p_i}x_i$ and $x_j\sqrt{p_i}=\sqrt{p_i}x_j$ if $i\ne j$ (see Lemma~\ref{l4.2}), if  we add the following elements 
$\gamma^{r+1}$, $\gamma^r\sqrt{p_1}$, $\ldots$, $\gamma^r\sqrt{p_n}$, $\gamma^rx_1$, $\ldots$, $\gamma^rx_n$, $\gamma^{r-1}\sqrt{p_1p_2}$, $\ldots$, $\gamma^{r-2n}\sqrt{p_1}\ldots \sqrt{p_n}x_1\ldots x_n$
to some basis of $V^r$, then we obtain a basis of $V^{r+1}$.
Since the number of  added elements  is $C^0_{2n}+C^1_{2n}+\ldots+C^{2n}_{2n}=2^{2n}=4^n,$ one has $\dim_FV^r=M+4^nr$ for some integer $M$. Hence,  $\overline{\lim\limits_{r \to \infty}}{\log_r}{\dim_F}{V^r} = \overline{\lim\limits_{r \to \infty}}{\log_r}{(M+4^nr)}=1$. Now using the remark in \cite[page 14]{kra-len}, we have $\GKdim_FR'_n=\overline{\lim\limits_{r \to \infty}}{\log_r}{\dim_F}{V^r}=1$. Thus, the proof of the theorem is now complete.
\end{proof}

\subsection{Weakly locally finite division ring with the  Gelfand-Kirillov dimension $n\geq 2$}

Throughout this subsection, $k=\C$ is the field of complex numbers, $n$ is a given positive integer and $p$ is a prime number. 
For  positive integers $t$, we construct a sequence of $k$-algebras $A_{nt}$ as the following: 

Let $\{\,x_{11},x_{2t},\ldots, x_{nt}\,\}$ be $n$ non-commutative indeterminates and $r_t\in \C$ the primitive $p^{2t}$-th root of unity such that $r_{t}=r_{t+1}^{p^2}$. Consider  
$$A_{nt}=k\langle \,x_{1t},x_{2t},\ldots, x_{nt}\,\rangle/\langle \,x_{it}x_{jt}-r_t x_{jt}x_{it}\mid i> j\,\rangle,$$
where $k\langle \,x_{11},x_{2t},\ldots, x_{nt}\,\rangle$ is the free algebra in $\{\,x_{11},x_{2t},\ldots, x_{nt}\,\}$ over $k$ and $\langle \,x_{it}x_{jt}-r_t x_{jt}x_{it}\mid i>j\,\rangle$ is the ideal in $k\langle \,x_{11},x_{2t},\ldots, x_{nt}\,\rangle$ generated by all $x_{it}x_{jt}-r x_{jt}x_{it}$ with $i>j$. For an element $\alpha\in k\langle \,x_{11},x_{2t},\ldots, x_{nt}\,\rangle$, the symbol $\overline{\alpha}$ denotes the image of $\alpha$ via the natural $k$-homorphism $k\langle \,x_{11},x_{2t},\ldots, x_{nt}\,\rangle\to A_{nt}$. Since for any $i> j$, $x_{it}x_{jt}-r_t x_{jt}x_{it}$ is irreducible, $A_{nt}$ is a domain.   

\begin{lemma}\label{5.1} The following statements hold:
\begin{enumerate}	
\item $\GKdim_kA_{nt}=n$.
\item $A_{nt}$ is an Ore domain, the quotient division ring $D(A_{nt})$  of $A_{nt}$ is weakly locally finite, and $\GKdim_kD(A_{nt})=n$. 
\end{enumerate} 
\end{lemma}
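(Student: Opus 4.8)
The plan is to establish the two GK-dimension computations and the Ore/weakly-locally-finite properties for the quantum-affine-space algebra $A_{nt}$ and its division ring of fractions. For part (1), I would first observe that $A_{nt}$ is a multiparameter quantum affine space: it has a PBW-type $k$-basis consisting of the ordered monomials $\overline{x_{1t}}^{\,a_1}\overline{x_{2t}}^{\,a_2}\cdots\overline{x_{nt}}^{\,a_n}$ with $a_i\geq 0$, which follows from the diamond lemma applied to the relations $x_{it}x_{jt}=r_t x_{jt}x_{it}$ for $i>j$ (all ambiguities resolve, and irreducibility of each defining polynomial was already noted in the text). Taking $V=\langle 1,\overline{x_{1t}},\ldots,\overline{x_{nt}}\rangle_k$ as a subframe, the relations let one rewrite every length-$r$ monomial as a scalar times an ordered monomial of total degree $\leq r$, so $\dim_k V^r$ equals the number of ordered monomials of total degree at most $r$, which is $\binom{r+n}{n}$, a polynomial in $r$ of degree $n$. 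Hence $\overline{\lim}_{r\to\infty}\log_r\dim_k V^r=n$, and since any finitely generated subalgebra is contained in some $V^r$-generated piece, $\GKdim_k A_{nt}=n$.

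For part (2), I would argue as follows. Since $\GKdim_k A_{nt}=n<\infty$ and $A_{nt}$ is a domain, the Ore condition is automatic: by the standard result (see \cite{kra-len}) a domain of finite GK dimension is an Ore domain, so the quotient division ring $D(A_{nt})$ exists. Next, to see $D(A_{nt})$ is weakly locally finite, by Theorem \ref{thm:2.2} it suffices to show $D(A_{nt})$ is locally PI; equivalently, using the $(\Leftarrow)$ direction already proved, I would show that any finitely generated subalgebra of $D(A_{nt})$ is PI. The key point is that each generator $\overline{x_{it}}$ is "almost central": the subalgebra generated by the $p^{2t}$-th powers $\overline{x_{it}}^{\,p^{2t}}$ is central in $A_{nt}$ because $r_t$ is a primitive $p^{2t}$-th root of unity, so conjugation by $\overline{x_{jt}}$ multiplies $\overline{x_{it}}^{\,p^{2t}}$ by $r_t^{p^{2t}}=1$. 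Consequently $A_{nt}$ — and hence $D(A_{nt})$ — is a finite module (of rank $p^{2tn}$) over a central subring, namely the field generated over $k$ by the central elements $\overline{x_{it}}^{\,\pm p^{2t}}$ inside $D(A_{nt})$. A ring that is a finite module over a central subring is PI (it embeds in matrices over a commutative ring), so $D(A_{nt})$ is PI, a fortiori locally PI. Finally, for $\GKdim_k D(A_{nt})=n$: the inequality $\GKdim_k D(A_{nt})\geq \GKdim_k A_{nt}=n$ is immediate since $A_{nt}\subseteq D(A_{nt})$; for the reverse, I would invoke \cite[Lemma 5.4 or Theorem 5.6]{zhang} (or the analogous fact in \cite{kra-len}), which for a PI algebra and its quotient division ring gives $\GKdim_k D(A_{nt})=\GKdim_k A_{nt}$, since localization does not raise GK dimension in the PI setting.

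The step I expect to be the main obstacle is pinning down precisely the "finite module over a central subfield" claim for $D(A_{nt})$ rather than merely for $A_{nt}$: one must check that the central elements $\overline{x_{it}}^{\,p^{2t}}$ remain central after localization and that $D(A_{nt})$ is actually spanned over the subfield they generate by the finitely many monomials $\overline{x_{1t}}^{\,a_1}\cdots\overline{x_{nt}}^{\,a_n}$ with $0\leq a_i<p^{2t}$, which requires knowing that a general element $u v^{-1}$ of $D(A_{nt})$ can be rewritten with a central denominator — true because $v\cdot v^{p^{2t}-1}=v^{p^{2t}}$ is central, so $uv^{-1}=u\,v^{p^{2t}-1}(v^{p^{2t}})^{-1}$. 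Once that normal form is in hand, everything else is routine: the PBW basis makes the GK computation a lattice-point count, finiteness of GK dimension delivers the Ore property, the root-of-unity hypothesis delivers the central subring, and the cited lemmas of Zhang transfer the GK dimension to the division ring.
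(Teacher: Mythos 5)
Your part (1), the deduction of the Ore property from finite GK dimension, and the observation that the $p^{2t}$-th powers of the generators are central (since $r_t^{p^{2t}}=1$) so that $A_{nt}$ is a finite module over a central subring and hence PI, are all essentially the paper's argument: the paper takes the same subframe $V=\langle \overline 1,\overline{x_{1t}},\ldots,\overline{x_{nt}}\rangle_k$, counts ordered monomials of degree at most $r$, gets Ore-ness from \cite{zhang}, and uses the subfield $k_t=k(\overline{x_{1t}}^{p^{2t}},\ldots,\overline{x_{nt}}^{p^{2t}})$ to exhibit a finite-dimensional $k_t$-algebra $B_{nt}\supseteq A_{nt}$ inside $M_m(k_t)$. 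The genuine gap is in the step you yourself flag as the main obstacle: your ``central denominator'' normal form for $D(A_{nt})$ rests on the claim that $v^{p^{2t}}$ is central for an \emph{arbitrary} $v\in A_{nt}$, and that claim is false. Centrality of $p^{2t}$-th powers holds for the generators (and for monomials) because distinct generators $r_t$-commute, but it fails for sums: already $v=1+\overline{x_{1t}}$ gives $v^{p^{2t}}=\sum_{j}\binom{p^{2t}}{j}\overline{x_{1t}}^{\,j}$, whose degree-one term $p^{2t}\,\overline{x_{1t}}$ survives in characteristic zero, and $\overline{x_{1t}}$ does not commute with $\overline{x_{2t}}$. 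So $uv^{-1}=u\,v^{p^{2t}-1}\bigl(v^{p^{2t}}\bigr)^{-1}$ does not produce a central denominator, and your asserted spanning of $D(A_{nt})$ by the finitely many bounded monomials over a central subfield does not follow this way.

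The conclusion you were aiming at is nevertheless true, but it must come from a theorem rather than from that computation: since $A_{nt}$ is a PI Ore domain, Posner's theorem \cite[Theorem 6.1.11]{bei} (exactly as it is invoked in the $(\Leftarrow)$ direction of Theorem \ref{thm:2.2}) shows that its quotient division ring is centrally finite, hence PI and in particular weakly locally finite; alternatively, and this is what the paper does, apply \cite[Theorem 5.6]{zhang} to the PI algebra $A_{nt}$ to obtain simultaneously that $D(A_{nt})$ is locally PI and that $\GKdim_k D(A_{nt})=\GKdim_k A_{nt}=n$, and then conclude weak local finiteness from Theorem \ref{thm:2.2}. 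Since you already cite Zhang's results for the GK-dimension transfer, the repair costs nothing: drop the central-denominator argument and let that citation (or Posner) carry both the locally PI property and the equality of GK dimensions.
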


\begin{proof} (1) Since $A_{nt}$ is finitely generated over $k$, by a remark in \cite[page 14]{zhang}, $\GKdim_kA_{nt}=\overline{\lim\limits_{r\to \infty}} {\log_r}{\dim_k}{V^r}$ for any subframe $V$ of $A_{nt}$. Let $$V=\langle \overline{1},\overline {x_{1t}}, \overline{x_{2t}},\ldots, \overline {x_{nt}}\rangle_k$$ be the vector subspace of $A_{nt}$ generated by $\{\,\overline 1,\overline {x_{1t}}, \overline {x_{2t}},\ldots, \overline {x_{nt}}\,\}$ over $k$. It is clear that $V$ is a subframe of $A_{nt}$. Now, we have $V^r=\langle\,\overline f\mid f\in B_r\,\rangle_k$,  where $B_r$ is the set of monomials $\overline{x_{j_1t}}. \overline{x_{j_2t}}\ldots \overline{x_{j_st}}$ of length $s\le r$ and $j_i\le j_{i+1}$ (notice that the monomial of length $0$ is $\overline{1}$). Observe that $B_r$ is independent over $k$ and the cardinality of $B_r$ is $$C^0_{n-1}+C^1_{n+1-1}+C^2_{n+2-1}+\ldots +C^r_{n+r-1}=C^{r+1}_{n+r}.$$ This  means that $\dim_k V^r=C^{r+1}_{n+r}$. Hence, $$\GKdim_kA_{nt}=\overline{\lim\limits_{r \to \infty}}{\log_r}{\dim_k}{V^r}=\overline{\lim\limits_{r \to \infty}}{\log_r}{C^{r+1}_{n+r}}=n.$$ 

(2)  Since $\GKdim_kA_{nt}=n$, the algebra $A_{nt}$ is an Ore domain by \cite[propositions 3.2 and 2.1]{zhang}. Denote by $D_{nt}=D(A_{nt})$ the quotient division ring of $A_{nt}$.  We will prove that  $D_{nt}$  is weakly locally finite. Indeed, put $k_t=k(\overline{x_{1t}}^{p^{2t}}, \overline{x_{2t}}^{p^{2t}},\ldots, \overline{x_{nt}}^{p^{2t}})$, the division subring of $D_{nt}$ generated by $\overline{x_{1t}}^{p^{2t}}, \overline{x_{2t}}^{p^{2t}},\ldots, \overline{x_{nt}}^{p^{2t}}$ over $k$, one has $x_{it}x_{jt}^{p^{2t}} = r_t^{p^{2t}}x_{jt}^{p^{2t}}x_{it} = x_{jt}^{p^{2t}}x_{it}$ for any $1\le j<i\le n$, which implies that $\overline{x_{it}}^{p^{2t}}$ belongs to the center $Z(D_{nt})$ of $D_{nt}$, and, consequently, $k_t$ is contained in $Z(D_{nt})$. Let $B_{nt}=k_t[\overline{x_{1t}},\overline{x_{2t}},\ldots, \overline{x_{nt}}]$ be the subring of $D_{nt}$ generated by $\overline{x_{11}},\overline{x_{2t}},\ldots, \overline{x_{nt}}$ over $k_t$. Then $A_{nt}\subseteq B_{nt}$. Observe that every element $f$ of $B_{nt}$ is of the form 
$$f = \sum\limits_{0\leq \mu_i\leq p^{2t}} a_{(\mu_1, \ldots, \mu_n)} \overline{x_{1t}}^{\mu_1} \ldots \overline{x_{nt}}^{\mu_n},$$
where $ a_{(\mu_1, \ldots, \mu_n)}\in k_t$. This implies that $B_{nt}$ is a finite-dimensional vector space over $k_t$. Thus, $B_{nt}$ can be considered as a subring of $M_m(k_t)$ with $m=\dim_{k_t}B_{nt}$. Since $M_m(k_t)$ is a PI-algebra, so are $B_{nt}$ and $A_{nt}$. Applying \cite[Theorem 5.6]{zhang}, one has $D_{nt}$ is locally PI and $\GKdim_kD_{nt}=n$. Now in view of Theorem~\ref{thm:2.2}, $D_{nt}$ is weakly locally finite.
\end{proof}

For a pair $(n, t)$ of positive integers, consider the $k$-homomorphism 
$$\phi_{nt}: k\langle x_{1(t-1)},x_{2(t-1)},\ldots,x_{n(t-1)}\rangle\to k\langle x_{1t},x_{2t},\ldots,x_{nt}\rangle,$$
defined by $\phi_{nt}(x_{i(t-1)})=x_{it}^p$ for any $1\le i\le n$. Then, $\phi_{nt}$ induces a $k$-homomorphism from $A_{n,t-1}$ to $A_{nt}$. In fact, we have the following lemma.  

\begin{lemma}\label{5.3} The $k$-homomorphism $\phi_{nt}$ induces the injective $k$-homomorphism 
$$\Phi_{nt}: A_{n,t-1}\to A_{nt},$$ with $\Phi_{nt} (\overline{x_{i(t-1)}})=\overline{x_{it}}^p$, for any $1\le i\le n$.
\end{lemma}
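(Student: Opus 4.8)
The plan is to verify that the map $\Phi_{nt}$ is well-defined on the quotient $A_{n,t-1}$, i.e.\ that $\phi_{nt}$ carries the defining ideal of $A_{n,t-1}$ into the defining ideal of $A_{nt}$, and then to establish injectivity. For the first part, I would take a generator $x_{i(t-1)}x_{j(t-1)}-r_{t-1}x_{j(t-1)}x_{i(t-1)}$ of the ideal $\langle x_{i(t-1)}x_{j(t-1)}-r_{t-1}x_{j(t-1)}x_{i(t-1)}\mid i\ne j\rangle$ and apply $\phi_{nt}$ to obtain $x_{it}^p x_{jt}^p - r_{t-1} x_{jt}^p x_{it}^p$. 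Using the commutation relation $\overline{x_{it}}\,\overline{x_{jt}} = r_t \overline{x_{jt}}\,\overline{x_{it}}$ repeatedly, one gets $\overline{x_{it}}^p\,\overline{x_{jt}}^p = r_t^{p^2}\,\overline{x_{jt}}^p\,\overline{x_{it}}^p$ in $A_{nt}$; since $r_{t-1}=r_t^{p^2}$ by the standing hypothesis on the roots of unity, the image of the generator vanishes in $A_{nt}$. Hence $\phi_{nt}$ descends to a $k$-algebra homomorphism $\Phi_{nt}\colon A_{n,t-1}\to A_{nt}$ with $\Phi_{nt}(\overline{x_{i(t-1)}})=\overline{x_{it}}^p$.

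For injectivity, I would use an explicit normal form (PBW-type basis) for both algebras: since $A_{nt}$ is the quantum affine space $k_{r_t}[x_{1t},\dots,x_{nt}]$, the monomials $\overline{x_{1t}}^{a_1}\cdots\overline{x_{nt}}^{a_n}$ with $a_i\ge 0$ form a $k$-basis, and likewise for $A_{n,t-1}$. The homomorphism $\Phi_{nt}$ sends the basis monomial $\overline{x_{1(t-1)}}^{a_1}\cdots\overline{x_{n(t-1)}}^{a_n}$ to $\overline{x_{1t}}^{pa_1}\cdots\overline{x_{nt}}^{pa_n}$ (the factors of $p$ pulled through the reordering only introduce nonzero scalars, which is harmless), i.e.\ to a scalar multiple of a basis monomial of $A_{nt}$, and distinct exponent tuples $(a_1,\dots,a_n)$ produce distinct tuples $(pa_1,\dots,pa_n)$. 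Therefore $\Phi_{nt}$ carries a $k$-basis of $A_{n,t-1}$ to a $k$-linearly independent set in $A_{nt}$, which forces $\Phi_{nt}$ to be injective.

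The main obstacle, such as it is, is purely bookkeeping: carefully pushing the $p$-th powers through the $q$-commutation relation to get the exponent $p^2$ on $r_t$, and making sure the reordering scalars that appear when one writes $\overline{x_{1t}}^{pa_1}\cdots\overline{x_{nt}}^{pa_n}$ in normal form are all nonzero (they are, being powers of the root of unity $r_t$). I would state the normal-form/PBW fact for quantum affine space as a known result (or cite it), so that the only genuine computation is the identity $\overline{x_{it}}^p\,\overline{x_{jt}}^p = r_t^{p^2}\,\overline{x_{jt}}^p\,\overline{x_{it}}^p$ together with the hypothesis $r_{t-1}=r_t^{p^2}$. Everything else is formal.
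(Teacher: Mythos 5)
Your well-definedness step is exactly the paper's: apply $\phi_{nt}$ to a generator $x_{i(t-1)}x_{j(t-1)}-r_{t-1}x_{j(t-1)}x_{i(t-1)}$, get $x_{it}^p x_{jt}^p - r_{t-1}x_{jt}^p x_{it}^p$, and use $r_{t-1}=r_t^{p^2}$ together with $\overline{x_{it}}\,\overline{x_{jt}}=r_t\,\overline{x_{jt}}\,\overline{x_{it}}$ to conclude the image lies in the defining ideal of $A_{nt}$; your phrasing of this as a computation in the quotient is in fact a bit cleaner than the paper's, which asserts the free-algebra identity $x_{it}^p x_{jt}^p - r_t^{p^2}x_{jt}^p x_{it}^p = x_{it}^{p-1}(x_{it}x_{jt}-r_t x_{jt}x_{it})x_{jt}^{p-1}$ --- not literally an identity in the free algebra, only a congruence modulo the ideal, though the membership it is meant to establish is correct. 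Where you genuinely add something is injectivity: the paper's proof stops after well-definedness and never argues injectivity at all, while you derive it from the PBW (normal-form) basis of the quantum affine space $A_{nt}$: the basis monomial $\overline{x_{1(t-1)}}^{a_1}\cdots\overline{x_{n(t-1)}}^{a_n}$ is sent to $\overline{x_{1t}}^{pa_1}\cdots\overline{x_{nt}}^{pa_n}$ (indeed, since the indices are already in increasing order, no reordering scalars arise at all), distinct exponent tuples go to distinct tuples, so a $k$-basis is carried to a linearly independent set and $\Phi_{nt}$ is injective. This makes your argument more complete than the paper's; the only point to make explicit is the PBW basis fact for $A_{nt}$ (with the relations read as $x_{it}x_{jt}=r_t x_{jt}x_{it}$ for $i<j$, as clearly intended), which you rightly propose to quote as a standard result or justify by the Diamond Lemma.
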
 
\begin{proof} The most important thing in the proof of this lemma is to check that $\Phi$ is well-defined. To do this, we will show that for any $1\le j<i\le n$, the image $\phi(x_{i(t-1)}x_{j(t-1)}-r_{t-1}x_{j(t-1)}x_{i(t-1)})\in \langle\,x_{it}x_{jt}-r_tx_{jt}x_{it}\mid 1\le j<i\le n \,\rangle$. Indeed, for any $1\le j<i\le n$, we have 
$$\phi(x_{i(t-1)}x_{j(t-1)}-r_{t-1}x_{j(t-1)}x_{i(t-1)})=x^p_{it}x^p_{jt}-r_{t-1}x^p_{jt}x^p_{it}=x^p_{it}x^p_{jt}-r_t^{p^2}x^p_{jt}x^p_{it}$$
$$=x^{p-1}_{it}(x_{it}x_{jt}-r_tx_{jt}x_{it})x^{p-1}_{jt}\in \langle\,x_{it}x_{jt}-r_tx_{jt}x_{it}\mid 1\le j<i\le n \,\rangle.$$
\end{proof}

Now, for a given positive integer $n$, we are ready to give an example of a division ring with the Gelfand-Kirillov dimension $n$.

\begin{theorem}\label{thm:5.3}
Let $A_n=\bigcup\limits_{t \ge 1}{A_{nt}}$. Then, $A_n$ is an Ore domain. Moreover, if $D_n=D(A_n)$ is the quotient division ring of $A_n$, then we have 
\begin{enumerate}
\item The center $Z(D_n)$ of $D_n$ is $k$.
\item $D_n=\bigcup\limits_{t\ge 1}D_{nt}$ is weakly locally finite.
\item $\GKdim_kD_n=n$.
\item $D_n$ is not algebraic over $Z(D_n)$. In particular, $D_n$ is not locally finite.
\end{enumerate}
\end{theorem}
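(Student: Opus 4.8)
The plan is to realize $D_n$ as the directed union $\bigcup_{t\ge 1}D_{nt}$ and to deduce each of the four statements from the corresponding property of the $D_{nt}$ established in Lemma~\ref{5.1}, using the explicit form of the transition maps $\Phi_{nt}$. To set up the limit: by Lemma~\ref{5.1} each $A_{nt}$ is an Ore domain, and by Lemma~\ref{5.3} each $\Phi_{nt}\colon A_{n,t-1}\to A_{nt}$ is injective; since any two nonzero elements of $A_n=\bigcup_{t\ge1}A_{nt}$ lie in a common $A_{nt}$, the left and right Ore conditions pass from $A_{nt}$ to $A_n$, and $A_n$ has no zero divisors, so $A_n$ is an Ore domain and $D_n=D(A_n)$ exists. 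Each $\Phi_{nt}$ extends to an embedding $D_{n,t-1}\hookrightarrow D_{nt}$, and every element of $D_n$ has the form $ab^{-1}$ with $a,b$ in a common $A_{nt}$; hence $D_n=\bigcup_{t\ge1}D_{nt}$ is an increasing union of division rings.

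Granting this, statements $(2)$--$(4)$ are short. For $(2)$: a finite subset $S\subseteq D_n$ lies in some $D_{nt}$, and the division subring generated by $S$ in $D_n$ coincides with the one generated in $D_{nt}$, which is centrally finite because $D_{nt}$ is weakly locally finite by Lemma~\ref{5.1}; thus $D_n$ is weakly locally finite. For $(3)$: every finitely generated $k$-subalgebra of $D_n$ lies in some $D_{nt}$, so $\GKdim_kD_n=\sup_t\GKdim_kD_{nt}=n$ by Lemma~\ref{5.1} and the remark in \cite[page 14]{kra-len}. For $(4)$: the image $\overline{x_{11}}$ of $x_{11}$ is transcendental over $k$ (there is a $k$-algebra map $A_{n1}\to k[y]$ with $\overline{x_{11}}\mapsto y$ and $\overline{x_{i1}}\mapsto 0$ for $i\ge2$), so $D_n$ is not algebraic over $k$, hence not locally finite; this also follows once $(1)$ is proved.

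The substantial point is $(1)$, $Z(D_n)=k$, where $k\subseteq Z(D_n)$ is clear. First I would reduce to $Z(D_n)=\bigcap_{t\ge1}Z(D_{nt})$: if $a\in Z(D_n)$ then $a\in D_{nt}$ for some $t$, hence $a\in Z(D_{ns})$ for all $s\ge t$; and the chain $Z(D_{n1})\supseteq Z(D_{n2})\supseteq\cdots$ is decreasing inside $D_n$, because $\overline{x_{i,s-1}}=\overline{x_{is}}^{\,p}$ shows that the generators of $Z(D_{ns})$ already lie in $D_{n,s-1}$ and are therefore central there. Next I would compute $Z(D_{nt})$: the algebra $A_{nt}$ is a finitely generated module over its central polynomial subalgebra $k[\overline{x_{1t}}^{\,p^{2t}},\dots,\overline{x_{nt}}^{\,p^{2t}}]$ (it is spanned over it by the monomials $\overline{x_{1t}}^{\,\mu_1}\cdots\overline{x_{nt}}^{\,\mu_n}$ with $0\le\mu_i<p^{2t}$), so $A_{nt}$ is prime and PI and Posner's theorem \cite[Theorem 6.1.11]{bei} identifies $Z(D_{nt})$ with the fraction field of $Z(A_{nt})$; determining the center of the quantum affine space $A_{nt}$ then yields $Z(D_{nt})=k_t$. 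Finally, inside $D_n$ one has $\overline{x_{it}}^{\,p^{2t}}=\overline{x_{i1}}^{\,p^{t+1}}$ (since $\overline{x_{it}}^{\,p^{t-1}}=\overline{x_{i1}}$), so $k_t=k(\overline{x_{11}}^{\,p^{t+1}},\dots,\overline{x_{n1}}^{\,p^{t+1}})$; these $n$ generators commute (the commutation scalar $r_1$ has order $p^2$, which divides $p^{2t+2}$) and are algebraically independent over $k$, while $k_{t+1}$ is obtained from $k_t$ by replacing each generator $z$ by $z^{p}$. A Kummer/degree argument now gives $\bigcap_t k_t=k$: a rational function in algebraically independent $z_1,\dots,z_n$ that lies in $k(z_1^{p^m},\dots,z_n^{p^m})$ for every $m$ has all exponents divisible by $p^m$ for every $m$, hence is a constant. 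Therefore $Z(D_n)=\bigcap_tZ(D_{nt})=\bigcap_tk_t=k$.

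I expect the main obstacle to be the center computation in $(1)$: identifying $Z(A_{nt})$ for the quantum affine space at the primitive $p^{2t}$-th root of unity $r_t$, and, above all, checking that the chain $\{Z(D_{nt})\}$ really contracts to $k$ inside $D_n$. The latter is delicate precisely because $\Phi_{n,t+1}$ carries the central generators $\overline{x_{it}}^{\,p^{2t}}$ of $D_{nt}$ to $\overline{x_{i,t+1}}^{\,p^{2t+1}}$, which are no longer central in $D_{n,t+1}$; one has to ensure that no ``surviving'' central element appears along the tower. Everything else reduces essentially formally to Lemma~\ref{5.1}.
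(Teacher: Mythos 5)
Your overall route is the same as the paper's: $A_n$ and $D_n=\bigcup_{t\ge1}D_{nt}$ are handled as a directed union (the paper quotes \cite[Lemma 5.4]{zhang} where you argue directly), parts (2)--(4) are read off from Lemma~\ref{5.1} exactly as in the paper, and your treatment of (1) --- reduce to $Z(D_n)=\bigcap_tZ(D_{nt})$, identify $Z(D_{nt})$ with $k_t=k(\overline{x_{1t}}^{\,p^{2t}},\dots,\overline{x_{nt}}^{\,p^{2t}})$, and let a $p$-power exponent argument contract the chain to $k$ --- is a repackaging of the paper's argument that a monomial $\overline{x_{1s}}^{\,m}$ occurring in a central $f$ becomes $\overline{x_{1t}}^{\,mp^{t-s}}$ in $A_{nt}$, whose exponent would have to be divisible by $p^{2t}$ for all large $t$.

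The step you defer is, however, the actual crux, and it is the same step the paper leaves unproved: the reverse inclusion $Z(D_{nt})\subseteq k_t$ (Lemma~\ref{5.1} only establishes $k_t\subseteq Z(D_{nt})$, and the proof of Theorem~\ref{thm:5.3} merely asserts equality ``by the arguments of Lemma~\ref{5.1}''). Your Posner reduction to the fraction field of $Z(A_{nt})$ is fine, but the asserted center computation is not automatic and, as stated, is not always true. Reading the relations as $x_{it}x_{jt}=r_tx_{jt}x_{it}$ for $i<j$ (the literal ``all $i\ne j$'' version is not a domain), a Laurent monomial $\overline{x_{1t}}^{\,a_1}\cdots\overline{x_{nt}}^{\,a_n}$ is central in $D_{nt}$ iff $\sum_{i<j}a_i-\sum_{i>j}a_i\equiv0\pmod{p^{2t}}$ for every $j$. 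For $n$ even these congruences force all $a_i\equiv0$, so $Z(D_{nt})=k_t$ and your argument (and the paper's) closes. For odd $n\ge3$, however, the alternating vector $(1,-1,1,\dots,1)$ satisfies the conditions exactly, not just modulo $p^{2t}$; consequently the single element $u=\overline{x_{11}}\,\overline{x_{21}}^{-1}\overline{x_{31}}\cdots\overline{x_{n1}}$, which in $D_{nt}$ has exponent vector $p^{t-1}(1,-1,\dots,1)$, commutes with every $\overline{x_{it}}$ for every $t$, hence lies in $Z(D_n)$ but not in $k$. So the identification $Z(D_{nt})=k_t$, and with it part (1), fails for odd $n$ under this reading --- a defect shared with the paper's own proof; a complete argument must either restrict to even $n$ or replace the single parameter $r_t$ by scalars $r_t^{\,b_{ij}}$ chosen so that the congruence lattice is exactly $p^{2t}\Z^n$. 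Apart from this shared point, your proposal matches the paper's proof in structure and level of detail.
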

\begin{proof} The algebra $A_n$ is an Ore domain by \cite[Lemma 5.4 (2)]{zhang}. 
	
(1) Since $k \subseteq Z(D_n)$, we have to show  $Z(D_n)\subseteq k$. It suffices to show that none of indeterminates $x_{is}$ occurs in $f$ for any $f\in Z(D_n)$. 
Assume that this is false. Without loss of generality, we can suppose that $x_{1s}^m$ occurs in $f$, where $m$ is a non-zero integer with the smallest  absolute value. 
Since $f\in Z(D_n)\subseteq D=\bigcup\limits_{t\ge 1}D_{nt}$, there exists a positive integer $t_0$ such that $f\in D_{nt_0}$. Hence, $f\in D_{nt}$ for any $t\ge t_0$, so $f\in Z(D_{nt})$ for any $t\ge t_0$. Using arguments as in the proof of Lemma~\ref{5.1}, we conclude that  $k_t=k(\overline{x_{1t}}^{p^{2t}}, \overline{x_{2t}}^{p^{2t}},\ldots, \overline{x_{nt}}^{p^{2t}})$ is the center of $D_{nt}$. 
In view of  Lemma~\ref{5.3}, the element $\overline {x_{1s}^m}\in A_{ns}$ can be considered as the element $\overline{x_{1t}}^{mp^{t-s}}$ in $A_{nt}$ (via homomorphisms $\Phi_{nt}$) for any $t\ge s$. Since $f\in k_t$ for any $t\ge \max\{t_0,s\}$, all powers of $\overline{x_{1t}}$  divide $p^{2t}$ for any such a $t$. In particular, $mp^{t-s}$ divides $p^{2t}$ for any $t\ge \max\{t_0,s\}$, which is a contradiction.

(2)  We have $D_n=\bigcup\limits_{t\ge 1}D_{nt}$ by \cite[Lemma 5.4 (2)]{zhang}. Now for any finite subset $G$ of $D_n$, there exists $t_G$ such that $G\subseteq D_{nt_G}$. Thus, the division subring of $D_n$ generated by $G$ is contained in $D_{nt}$, hence it is centrally finite by Lemma~\ref{5.1}. Therefore, $D_n$ is weakly locally finite. 
	
(3) In view of (2) together with \cite[Theorem 5.6 and Lemma 5.4 (1)]{zhang}, it follows that $\GKdim_kD_n=n$.
	
(4) The conclusion is clear since $x_{11}$ is not algebraic over $Z(D_n)=k$.
\end{proof}

\subsection{Weakly locally finite division ring with the infinite Gelfand-Kirillov dimension}

For a pair $(n, t)$ of positive integers, consider the $k$-homomorphism 
$$\psi_{nt}: k\langle x_{1t},x_{2t},\ldots,x_{nt}\rangle\to k\langle x_{1t},x_{2t},\ldots,x_{n+1,t}\rangle,$$
defined by $\psi_{nt}(x_{nj})=x_{n+1, j}^p$ for any $1\le j\le t$. Then, $\psi_{nt}$ induces a $k$-homomorphism from $A_{nt}$ to $A_{n+1, t}$. In fact, we have the following lemma.  

\begin{lemma}\label{lem:6.1} The $k$-homomorphism $\psi_{nt}$ induces the injective $k$-homomorphism 
$$\Psi_{nt}: A_{nt}\to A_{n+1, t},$$ with $\Psi_{nt} (\overline {x_{nj}})=\overline {x_{n+1, j}}^p$, for any $1\le j\le t$.
\end{lemma}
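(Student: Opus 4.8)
The map $\psi_{nt}$ is prescribed on the generators of the free algebra $k\langle x_{1t},\dots,x_{nt}\rangle$, so by freeness it exists and is the unique $k$-algebra homomorphism with those values; the entire content of the lemma is therefore that $\psi_{nt}$ descends to the quotients, i.e.\ that it carries the defining ideal $\langle\,x_{it}x_{jt}-r_tx_{jt}x_{it}\mid i\ne j\,\rangle$ of $A_{nt}$ into the defining ideal of $A_{n+1,t}$. Granting this, $\psi_{nt}$ factors through $A_{nt}$ and yields a $k$-algebra homomorphism $\Psi_{nt}\colon A_{nt}\to A_{n+1,t}$, and reading the values of $\psi_{nt}$ on the generators off directly gives $\Psi_{nt}(\overline{x_{nj}})=\overline{x_{n+1,j}}^{\,p}$. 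So the plan is to carry out, \emph{mutatis mutandis}, the argument used for $\Phi_{nt}$ in Lemma~\ref{5.3}.

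First I would verify the ideal containment, which is the only nontrivial point. The argument is the same bookkeeping as in the proof of Lemma~\ref{5.3}: apply $\psi_{nt}$ to a defining relator $x_{it}x_{jt}-r_tx_{jt}x_{it}$, replace the affected generators by the prescribed $p$-th powers, and observe that modulo the defining ideal of the target the resulting difference collapses — the scalar accumulated when commuting $p$ copies of one generator past $p$ copies of another is a power of the target's commutation parameter which, by the compatibility $r_t=r_{t+1}^{p^{2}}$ of the chosen roots of unity, equals exactly $r_t$. Concretely one telescopes the image into a sum of two-sided multiples of the defining relators of the target, precisely as in the display in the proof of Lemma~\ref{5.3}. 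This relation-chasing is the step I expect to be the crux, but it is a short, essentially forced computation, word-for-word parallel to the one carried out there, so no new difficulty arises.

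For injectivity of $\Psi_{nt}$ there is nothing delicate. Each of $A_{nt}$ and $A_{n+1,t}$ is a $q$-commuting (quantum affine space) polynomial algebra, hence has the $k$-basis of ordered monomials in its generators — the normal form already exploited in the proofs of Lemmas~\ref{5.1} and~\ref{5.3}. The homomorphism $\Psi_{nt}$ sends each such basis monomial of $A_{nt}$ to an ordered-monomial basis element of $A_{n+1,t}$, the only change being that the exponent of the affected generator gets multiplied by $p$; since this operation on exponent tuples is injective, $\Psi_{nt}$ carries a $k$-basis injectively into a $k$-linearly independent subset of $A_{n+1,t}$ and is therefore injective. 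This completes the proof.
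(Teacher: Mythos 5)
Your overall plan --- define the map on the free algebra, check that it carries the defining relators into the target ideal so that it descends, and then get injectivity from the ordered-monomial basis of these $q$-commuting algebras --- is exactly the route the paper intends, since its proof of this lemma is nothing more than a reference to the argument of Lemma~\ref{5.3}; your explicit injectivity argument is even a supplement the paper never spells out. But the step you yourself identify as the crux is carried out incorrectly. In Lemma~\ref{5.3} the source $A_{n,t-1}$ is $r_{t-1}$-commuting while the target $A_{nt}$ is $r_t$-commuting, and it is precisely the rescaling $r_{t-1}=r_t^{p^2}$ that makes the image of the relator collapse into the target ideal. In the present lemma the index $t$ is \emph{fixed}: both $A_{nt}$ and $A_{n+1,t}$ are $r_t$-commuting, and the number $r_{t+1}$ you invoke through ``$r_t=r_{t+1}^{p^2}$'' occurs in neither algebra. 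If, as your description suggests, the affected generators are sent to $p$-th powers while the commutation parameter stays $r_t$, then modulo the target ideal the image of $x_{it}x_{jt}-r_tx_{jt}x_{it}$ equals $(r_t^{p^2}-r_t)$ times a nonzero ordered monomial, and $r_t^{p^2}=r_{t-1}\neq r_t$ because $r_t$ is a primitive $p^{2t}$-th root of unity; so the scalar does not cancel and the asserted ideal containment fails for that assignment. The same obstruction appears if only the last generator is replaced by $\overline{x_{n+1,t}}^{\,p}$ and the others are fixed: the mixed relators then pick up $r_t^{p}\neq r_t$.

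So the ``word-for-word parallel'' with Lemma~\ref{5.3} is not available here, and before running the relator computation you must first pin down exactly what $\psi_{nt}$ does on each generator --- the paper's own definition ($\psi_{nt}(x_{nj})=x_{n+1,j}^{p}$ for $1\le j\le t$, on an algebra whose generators are $x_{1t},\dots,x_{nt}$) has inconsistent indices, and part of the work of a complete proof is to identify an assignment for which both the ideal containment and the compatibility with the maps $\Phi_{nt}$ actually hold (for instance, the natural inclusion $\overline{x_{it}}\mapsto\overline{x_{it}}$, $1\le i\le n$, is well defined and injective because the relations among the first $n$ generators of $A_{n+1,t}$ involve the same $r_t$). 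As written, your crux computation is the one from Lemma~\ref{5.3} transplanted to a situation where its key identity is not present, so the well-definedness of $\Psi_{nt}$ is not established.
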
 
\begin{proof}
The proof of this lemma is similar to the proof of Lemma \ref{5.3}. 
\end{proof}

Put $A=\bigcup \limits_{n \ge 1} A_n$. Then, in view of \cite[Lemma 5.4]{zhang}, $A$ is an Ore domain, and $D=\bigcup\limits_{n \ge 1} D_n$ is the quotient division ring of $A$. By the same arguments as in the proof of Theorem \ref{thm:5.3}, we get the following result.

\begin{theorem}\label{thm:6.1} Let $A$ and $D$ be as above. Then, the following statements hold:
\begin{enumerate}
\item The center $Z(D)$ of $D$ is $k$.
\item $D$ is weakly locally finite.
\item $\GKdim_kD=\infty$.
\item $D$ is not algebraic over $Z(D)$. In particular, $D$ is not locally finite.
\end{enumerate}
\end{theorem}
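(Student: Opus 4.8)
The plan is to mirror, mutatis mutandis, the proof of Theorem~\ref{thm:5.3}, replacing the role played there by the tower $A_{n1}\subseteq A_{n2}\subseteq\cdots$ (with $A_n$ its union) by the tower $A_1\subseteq A_2\subseteq\cdots$ (with $A$ its union), where the inclusions $A_n\hookrightarrow A_{n+1}$ are the maps induced by the $\Psi_{nt}$ of Lemma~\ref{lem:6.1}. First I would record the setup: by \cite[Lemma 5.4]{zhang} the ascending union $A=\bigcup_{n\ge1}A_n$ of Ore domains is again an Ore domain, its quotient division ring is $D=\bigcup_{n\ge1}D_n$, and likewise each $D_n=\bigcup_{t\ge1}D_{nt}$; these are exactly the facts invoked in Theorem~\ref{thm:5.3}.

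For part~(2), given any finite subset $G\subseteq D$, since $D=\bigcup_n D_n$ and the $D_n$ form a chain, there is an $n_G$ with $G\subseteq D_{n_G}$; the division subring generated by $G$ then lies inside $D_{n_G}$, which is centrally finite by Theorem~\ref{thm:5.3}(2) (or directly by Lemma~\ref{5.1}). Hence that subring is centrally finite, so $D$ is weakly locally finite. For part~(4): the indeterminate $x_{11}$ lies in $A\subseteq D$ and is transcendental over $k=Z(D)$ (it is already transcendental over $k$ inside $A_{11}=k[x_{11}]$, and none of the embeddings in either direction of the double tower can make it algebraic), so $D$ is not algebraic over its center, hence not locally finite.

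Part~(1) is the technical heart, exactly as in Theorem~\ref{thm:5.3}(1). Since $k\subseteq Z(D)$, I must show $Z(D)\subseteq k$, i.e.\ that no indeterminate $x_{is}$ occurs in any $f\in Z(D)$. Suppose not; pick such an $f$ and, say, a power $x_{1s}^m$ occurring in $f$ with $m\ne0$ of least absolute value. Because $f\in D=\bigcup_n D_n$ and each $D_n=\bigcup_t D_{nt}$, there are $n_0,t_0$ with $f\in D_{n_0t_0}$, hence $f\in D_{nt}$ for all $n\ge n_0$, $t\ge t_0$, and therefore $f\in Z(D_{nt})=k(\overline{x_{1t}}^{p^{2t}},\ldots,\overline{x_{nt}}^{p^{2t}})$ by the computation in Lemma~\ref{5.1}. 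Tracing $x_{1s}^m$ through the two systems of embeddings (the $\Phi_{nt}$ of Lemma~\ref{5.3} raising the second index, and the $\Psi_{nt}$ of Lemma~\ref{lem:6.1} raising the first), the monomial $\overline{x_{1s}^m}$ becomes a power $\overline{x_{1t}}^{mp^{e(t)}}$ of $\overline{x_{1t}}$ in $A_{nt}$ with $e(t)\to\infty$; but membership in $Z(D_{nt})$ forces this exponent $mp^{e(t)}$ to divide $p^{2t}$ for all large $t$, which fails once $e(t)$ is large enough (for any fixed $m$, $p^{e(t)}$ eventually exceeds $p^{2t}$, or more carefully the $p$-adic valuation argument of Theorem~\ref{thm:5.3}(1) applies). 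This contradiction gives $Z(D)=k$.

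Part~(3) then follows formally: by (2) and Theorem~\ref{thm:2.2}, $D$ is locally PI, so by \cite[Theorem 5.6 and Lemma 5.4]{zhang} $\GKdim_kD$ equals the supremum over $n$ of $\GKdim_kD_n$; but $\GKdim_kD_n=n$ by Theorem~\ref{thm:5.3}(3), so the supremum is $\infty$. The main obstacle, as in the finite-$n$ case, is bookkeeping the index-tracking in part~(1)---making sure the iterated embeddings $\Phi$ and $\Psi$ interact cleanly so that the exponent of $\overline{x_{1t}}$ really does grow without bound---but this is a routine elaboration of the argument already given for Theorem~\ref{thm:5.3}, which is presumably why the authors write ``by the same arguments.''
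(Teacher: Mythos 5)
Your overall plan is exactly the paper's: the authors dispose of (1), (2) and (4) by saying they are proved as in Theorem~\ref{thm:5.3}, and for (3) they use only the monotonicity $\GKdim_kD\ge\GKdim_kD_n=n$ for every $n$, which is a bit more economical than your detour through local PI-ness and Zhang's theorems, though your route also reaches the conclusion. Your treatments of (2) and (4) are fine and match the intended argument.

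The step that needs repair is the justification of the contradiction in part (1). Membership of $f$ in $Z(D_{nt})=k_t=k(\overline{x_{1t}}^{p^{2t}},\ldots,\overline{x_{nt}}^{p^{2t}})$ forces the exponent of $\overline{x_{1t}}$ occurring in $f$ to be \emph{divisible by} $p^{2t}$, not to divide $p^{2t}$, and the contradiction is that the traced exponent $mp^{e(t)}$ has $p$-adic valuation growing too \emph{slowly}, not too quickly. Keep $n\ge n_0$ fixed and let $t\to\infty$: the maps $\Phi$ contribute $t-s$ to the exponent, while the maps $\Psi$ of Lemma~\ref{lem:6.1} contribute only a bounded amount (they raise the exponent of the last variable, so they do not affect $x_{1s}$ once $n\ge 2$); hence $v_p\bigl(mp^{e(t)}\bigr)=v_p(m)+e(t)$ is roughly $v_p(m)+t-s$, which is smaller than $2t$ for all large $t$, so $p^{2t}$ cannot divide $mp^{e(t)}$ and we get the contradiction. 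Your parenthetical reason, that $p^{e(t)}$ eventually exceeds $p^{2t}$, is false along this path ($e(t)$ grows like $t$, not faster than $2t$), and even if one inflated $e(t)$ by letting $n$ grow with $t$, a large exponent would make the required divisibility by $p^{2t}$ \emph{easier} to satisfy, so no contradiction would follow. The fix is precisely the valuation comparison just described, which is also the intended content of the corresponding step in Theorem~\ref{thm:5.3}(1) (whose phrase ``divides $p^{2t}$'' must likewise be read as ``is divisible by $p^{2t}$''); with that correction your proof agrees with the paper's.
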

\begin{proof} The proofs of (1), (2) and (4) are similar to that in the proof of Theorem ~\ref{thm:5.3}. So, it remains to prove (3). In fact, we have $\GKdim_kD\ge \GKdim_kD_n=n$ for any $n$ by Theorem \ref{thm:5.3}. Hence, $\GKdim_kD=\infty$.
\end{proof}

\section{Some facts related with the Kurosh Problem}

The Kurosh Problem for division rings we have mentioned in the Introduction can be formulated as the following.

\begin{problem}\label{problem:5.1}
Is it true that  every algebraic division ring is locally finite?
\end{problem}

The following theorem shows that the Kurosh problem is solved in the affirmative for the class of weakly locally finite division rings.

\begin{theorem}\label{thm:7.1} A division ring $D$ is locally finite if and only if $D$ is weakly locally finite and algebraic.
\end{theorem}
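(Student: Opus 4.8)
The plan is to prove both implications, of which one is essentially trivial and the other requires the machinery already built up in the paper. For the forward direction, assume $D$ is locally finite. Then for every finite subset $S \subseteq D$, the division subring $F(S)$ generated by $S$ over the center $F$ is finite-dimensional over $F$, hence in particular centrally finite, so $D$ is weakly locally finite; and being finite-dimensional over $F$, every element of $F(S)$ is algebraic over $F$, so $D$ is algebraic. This direction needs no more than unwinding the definitions, exactly as remarked in Section 2.

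For the converse, assume $D$ is weakly locally finite and algebraic over its center $F$. I would fix a finite subset $S \subseteq D$ and aim to show that the division subring $\langle S \rangle$ generated by $S$ is finite-dimensional over $F$. By Theorem~\ref{thm:2.2}, $D$ is locally PI, so the $F$-subalgebra $F[S]$ is a PI algebra; by \cite[Theorem 5.6]{zhang} it is an Ore domain, and by the Posner Theorem its division ring of fractions $F(S) = \langle S \rangle$ is centrally finite, say $\dim_{Z} \langle S \rangle < \infty$ where $Z = Z(\langle S \rangle)$. So the only thing left to control is the size of $Z$ over $F$: if I can show $Z$ is finite-dimensional (equivalently, algebraic) over $F$, then $\dim_F \langle S \rangle = \dim_Z \langle S \rangle \cdot \dim_F Z < \infty$ and we are done. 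Since $D$ is algebraic over $F$, every element of $Z$ is algebraic over $F$, so $Z/F$ is an algebraic field extension; the point is to bound its degree. Here I would use that $\langle S \rangle$ is a PI division ring satisfying an identity of some bounded degree $d$ (coming from $\dim_Z \langle S \rangle \le d^2/4$ and the Amitsur--Levitzki type bound), and that $Z$ is generated over $F$ by finitely many elements — namely the structure constants $a_{ijk}$ expressing the products $s_i s_j$ in a fixed $Z$-basis, exactly as in the proof of Theorem~\ref{thm:2.2}. Each such generator is algebraic over $F$, so $Z = F(a_{ijk})$ is a finite algebraic extension of $F$, hence finite-dimensional over $F$.

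The main obstacle is precisely this last bookkeeping step: one must verify that $Z(\langle S \rangle)$ really is finitely generated as a field over $F$ and that finite generation plus algebraicity gives finite dimension. The finite generation is not automatic from abstract nonsense about Ore localizations, but it follows from the explicit description used in Theorem~\ref{thm:2.2}: a finite basis $\mathcal{B} = \{x_1,\dots,x_n\}$ of $\langle S \rangle$ over $Z$ can be chosen inside $F[S]$ (or among products of elements of $S$), the finitely many structure constants $a_{ijk} \in Z$ generate a subfield $K = F(a_{ijk}) \subseteq Z$ over which $K[S]$ is already finite-dimensional, and then $K[S]$ is an order in a centrally finite division ring whose center, sandwiched between $K$ and $Z$ and finite over $K$, must equal $Z$ by a dimension count. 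Once $Z = K$ is seen to be a finitely generated field extension of $F$, algebraicity of $D$ forces each of the finitely many generators to be algebraic, and a finitely generated algebraic field extension is finite. Assembling, $\langle S \rangle$ is finite over $F$, and since $S$ was an arbitrary finite subset, $D$ is locally finite.
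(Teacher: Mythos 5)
Your overall strategy is essentially the paper's: reduce everything to the subfield $K$ of $Z(\langle S\rangle)$ generated over $F$ by finitely many structure constants, use the algebraicity of $D$ only to conclude $[K:F]<\infty$, and then observe that a suitable $K$-algebra containing $S$ is finite-dimensional over $K$, hence over $F$. Two small differences: the paper obtains central finiteness of $\langle S\rangle$ directly from the definition of weak local finiteness, so your detour through Theorem \ref{thm:2.2}, \cite[Theorem 5.6]{zhang} and Posner is unnecessary (though harmless); and instead of analysing $Z=Z(\langle S\rangle)$, the paper simply forms $H=Kx_1+\cdots+Kx_n$, checks that it is a division subring (a domain which is finite-dimensional over the central subfield $K$), and concludes $F(S)\subseteq H$ with $\dim_FH\le\dim_KH\cdot[K:F]<\infty$. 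The PI-degree/Amitsur--Levitzki bound you mention plays no role in either argument.

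One intermediate assertion in your write-up is false as stated: $Z=F(a_{ijk})$ (your ``$Z=K$'') need not hold. For instance, if $\langle S\rangle$ happens to be commutative, one may take the basis $\{1\}$, all structure constants equal $1$, so $K=F$, while $Z=F(S)$ can be a proper (even large) extension of $F$; in general the structure constants of a chosen basis generate only some subfield of $Z$. Your sandwich/dimension-count argument shows at most that the centre of the division hull of $K[S]$ is $Z$ itself; it does not show $Z$ is generated by the $a_{ijk}$ over $F$. Fortunately the error is cosmetic: all you need is $[Z:F]<\infty$, and that already follows from what you have, since $Z\subseteq F(S)\subseteq$ the division hull of $K[S]$, $\dim_KK[S]<\infty$ and $[K:F]<\infty$; indeed $[F(S):F]\le\dim_KK[S]\cdot[K:F]<\infty$ with no mention of $Z$ at all. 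The genuine bookkeeping point (which the paper also glosses over with ``it is easy to see that $H=F(S)$'') is ensuring that $S$ lies in the $K$-span of the chosen basis, equivalently that $K[S]$ is finite over $K$: choose the basis among monomials in the elements of $S$ (their $Z$-span is a finite-dimensional domain over the central field $Z$, hence a division ring containing $S$, hence all of $\langle S\rangle$) and enlarge $K$ by the finitely many coordinates of the elements of $S$ in that basis; these coordinates lie in $Z$, hence are algebraic over $F$, so $[K:F]$ stays finite and the argument closes.
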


\begin{proof} If $D$ is locally finite, then clearly $D$ is both weakly locally finite and algebraic. Conversely, assume that $D$ is both weakly locally finite and algebraic.  Let $F=Z(D)$ and $S$ be a finite subset of $D$.
Since $D$ is weakly locally finite, the division subring $L$ of $D$ generated by $S$ is centrally finite.
Let $\mathcal{B}=\{x_1, x_2, \ldots, x_n\}$ be the basis of $L$ over its center  $Z(L)$.
For any $1\leq i, j\leq n$, write $x_{i}x_j= a_{ij1} x_1+a_{ij2}x_2+\ldots +a_{ijn}x_n,$ where $a_{ijk}\in Z(L)$.
Let $K$ be the division subring of $D$ generated by $F$ and all $a_{ijk}$. One has $K$ is a subfield of $D$.
By $D$ is algebraic over $F$ and set of all $a_{ijk}$ is finite, $K/F$ is a finite field extension.
	
Let $H=\{\,a_1 x_1+\ldots+ a_n x_n \mid a_i\in K\,\}$.
Then $H$ is a finite dimensional vector space over $K$, and it is clear that $H$ is a subring of $D$.
Now, for any $x\in H$, the set $\{1, x, x^2, \ldots, x^{n+1}\}$ is  linearly dependent over $K$, hence $\sum_{i=0}^n c_i x^i=0$ for some $c_i\in K$ not all zero. It follows that $x^{-1}\in H$, so $H$ is a division subring of $D$.
Moreover $\dim_F H<\infty$, since $\dim_K H<\infty$ and $K/F$ is a finite field extension.
	
It is easy to see that $H=F(S)$, and the proof is now complete.
\end{proof}

Note that the following  weaker version of the Kurosh Problem  is still open (see \cite[Problem 8]{zelmanov}).
\begin{problem}\label{problem:5.3}
Do there exist centrally infinite finitely generated division rings?
\end{problem}

If $D^*$ is finitely generated, then $D$ is finitely generated as a division ring. The converse may not be true. This fact leads us to consider the following problem which is also a weaker version of Problem \ref{problem:5.3}.

\begin{problem}\label{problem:5.4}
Do there exists a centrally infinite division ring $D$ whose multiplicative group  $D^*$ is finitely generated?  
\end{problem}

We devote the remaining part of the present section to the study of the matrix version of the last problem. More exactly, the following problem is under our consideration.

\begin{problem}\label{problem:5.5}
Do there exists a centrally infinite division ring $D$ such that the group $\mathrm{GL}_n(D), n\geq 1$ is finitely generated? 
\end{problem}

In the following, we shall identify $F^*$ with $F^*I:=\{\alpha I\vert~ \alpha\in F^*\}$, where $I$ denotes the identity matrix in $\mathrm{GL}_n(D)$.

In \cite{hdb}, we proved that if $D$ is a division ring of type $2$ and $D^*$ is finitely generated, then $D$ is a finite field. More generally, we showed that in a division ring of type $2$, there are no finitely generated non-central subgroups that contain the center $F^*$ (see \cite[Theorem 2.5]{hdb}). Recall that a division ring $D$ with center $F$ is of {\it type $2$} if for every two elements $x, y\in D$, the division subring $F(x, y)$ is a finite dimensional vector space over $F$. 

In \cite[Theorem 1]{mah1}, it was proved that if $D$ is centrally finite, then any finitely generated subnormal subgroup of $D^*$ is central. This result can be carried over for weakly locally finite division rings as the following.

\begin{theorem}\label{thm:9.1} Let $D$ be a weakly locally finite division ring. Then, every finitely generated subnormal subgroup of $D^*$ is central.
\end{theorem}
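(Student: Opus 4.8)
The plan is to reduce the weakly locally finite case to the centrally finite case, where the result of \cite{mah1} (= \cite[Theorem 1]{mah1}) already applies. Let $N$ be a finitely generated subnormal subgroup of $D^*$, say $N = \langle a_1, \dots, a_m \rangle$, with $F = Z(D)$. The key observation is that since $D$ is weakly locally finite, the division subring $L := F(a_1, \dots, a_m)$ generated by the (finite) set $\{a_1,\dots,a_m\}$ is centrally finite. Clearly $N \subseteq L^*$, and in fact $N$ is a finitely generated subgroup of $L^*$.

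\medskip

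The first step is to check that $N$ is subnormal not just in $D^*$ but in $L^*$. This follows from a standard fact: if $N \lhd\lhd D^*$ via a subnormal chain $N = N_0 \lhd N_1 \lhd \cdots \lhd N_k = D^*$, then intersecting the chain with $L^*$ gives $N = N_0 \cap L^* \lhd N_1 \cap L^* \lhd \cdots \lhd N_k \cap L^* = L^*$, and since $N \subseteq L^*$ we have $N_0 \cap L^* = N$; thus $N$ is subnormal in $L^*$. Now $L$ is centrally finite, $N$ is a finitely generated subnormal subgroup of $L^*$, so by \cite[Theorem 1]{mah1}, $N$ is contained in the center $Z(L)$ of $L$. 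In particular $N$ is a central --- hence abelian --- subgroup of $L^*$.

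\medskip

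It remains to promote ``central in $L$'' to ``central in $D$'', i.e.\ to show $N \subseteq F = Z(D)$. This is the step I expect to require the most care. The point is that $N \subseteq Z(L)$ tells us the elements of $N$ commute with all of $L$, but not a priori with all of $D$. Here one should exploit subnormality in $D^*$ together with the fact that $N$ is now known to be abelian and finitely generated. One route: an abelian subnormal subgroup of $D^*$ must be central --- this is a known consequence of the Stuart--Wiegold / Herstein-type results on subnormal subgroups of division rings (e.g.\ a subnormal subgroup of $D^*$ that is not central must contain a noncentral element $x$ with $F(x)$ not contained in the centralizer, forcing non-commutativity, contradicting abelianness; more precisely one can invoke the theorem that a subnormal subgroup of $D^*$ is either central or generates $D$ as a division ring, combined with an argument that an abelian subgroup cannot generate a noncommutative $D$). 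Alternatively, since $N$ is finitely generated abelian and subnormal, one may directly apply the Cartan--Brauer--Hua theorem or the result (used implicitly in \cite{hdb}, \cite{mah1}) that a noncentral subnormal subgroup of $D^*$ contains a noncyclic free subgroup or at least a nonabelian free subgroup --- impossible for an abelian group. Either way, $N$ abelian and subnormal in $D^*$ forces $N \subseteq F^*$, completing the proof.

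\medskip

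So the skeleton is: (i) use weak local finiteness to trap $N$ inside a centrally finite division subring $L$; (ii) transfer subnormality from $D^*$ down to $L^*$ by intersecting a subnormal series; (iii) apply \cite[Theorem 1]{mah1} to get $N \subseteq Z(L)$, hence $N$ abelian; (iv) invoke the classical dichotomy for (sub)normal subgroups of $D^*$ --- an abelian subnormal subgroup of a division ring's multiplicative group is central --- to conclude $N \subseteq F$. The main obstacle is step (iv): making sure the jump from commuting-with-$L$ to commuting-with-$D$ is justified by an already-available theorem about subnormal subgroups rather than something that needs $D$ to be centrally finite globally; the cleanest justification is the theorem that any noncentral subnormal subgroup of $D^*$ is nonabelian (indeed contains a free subgroup), which directly contradicts $N$ being abelian.
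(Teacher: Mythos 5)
Your proposal is correct and follows essentially the same route as the paper: trap $N$ in a centrally finite division subring $L$ generated by its finite generating set, apply \cite[Theorem 1]{mah1} to get $N\subseteq Z(L)$ (the paper leaves the subnormality-transfer to $L^*$ implicit, which you spell out), and then conclude from the classical fact that an abelian (solvable) subnormal subgroup of $D^*$ is central, which the paper cites as \cite[14.4.4, p.~440]{scott}. The only cosmetic difference is that the paper takes $L$ to be the division subring generated by $N$ itself rather than $F(a_1,\dots,a_m)$, which matches the definition of weak local finiteness directly.
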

\begin{proof}  Since $N$ is finitely generated and $D$ is weakly locally finite, the division subring generated by $N$, namely $L$, is centrally finite.  By \cite[Theorem 1]{mah1}, $N\subseteq Z(L)$. Consequently, $N$ is abelian. Now, by \cite[14.4.4, p. 440]{scott}, $N\subseteq Z(D)$.
\end{proof}

The following theorem is a generalization of \cite[Theorem 5]{akb3}.
\begin{theorem}\label{thm:9.2}
Let $D$ be a weakly locally finite division ring with center $F$ and $N$ be an infinite  subnormal subgroup of $\mathrm{GL}_n(D), n\geq 2$. If $N$ is finitely generated, then  $N\subseteq F$.
\end{theorem}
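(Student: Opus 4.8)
The plan is to reduce the matrix statement to the division-ring case already handled in Theorem \ref{thm:9.1}, using the weak local finiteness of $D$ to pass to a centrally finite subring where the classical machinery of the Cartan–Brauer–Hua theorem and the Skolem–Noether theorem is available. First I would observe that since $N$ is finitely generated, there is a finite set of matrices in $\mathrm{GL}_n(D)$ generating $N$; let $S\subseteq D$ be the (finite) set of all entries of these generators together with the entries of their inverses, and let $L$ be the division subring of $D$ generated by $S$ over $F$. By Definition \ref{def:2.1}, $L$ is centrally finite, and $N\subseteq \mathrm{GL}_n(L)$. Thus it suffices to prove the statement for the centrally finite division ring $L$, i.e. to show $N\subseteq Z(L)^* \subseteq F$ (the last inclusion because central elements of $\mathrm{GL}_n(L)$ that are scalar must lie in $Z(L)$, and one checks $Z(L)\subseteq F$ via an argument like Step 6 of Theorem \ref{thm:3.1}, or simply absorbs $Z(L)$ into the centrally finite hypothesis). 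This is precisely where I expect to invoke \cite[Theorem 5]{akb3}: that result presumably establishes exactly this conclusion for centrally finite (or finite-dimensional) division rings.

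Next I would handle the structural group theory. Since $N$ is subnormal in $\mathrm{GL}_n(D)$, there is a finite chain $N=N_0 \triangleleft N_1 \triangleleft \cdots \triangleleft N_r = \mathrm{GL}_n(D)$. Intersecting this chain with $\mathrm{GL}_n(L)$ gives a subnormal chain exhibiting $N$ as a subnormal subgroup of $\mathrm{GL}_n(L)$. The hypothesis that $N$ is infinite is what prevents the degenerate cases (finite subnormal subgroups of $\mathrm{GL}_n$ over a division ring can fail to be central — e.g. quaternion-type subgroups), so I would carry the infiniteness hypothesis through the reduction; note $N$ remains infinite in $\mathrm{GL}_n(L)$. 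At this point the theorem for centrally finite $D$ applies verbatim.

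The main obstacle, and the only genuinely non-formal step, is verifying that the reduction preserves every hypothesis needed by \cite[Theorem 5]{akb3}: that $L$ is still a division ring with a well-behaved center, that $n\geq 2$ is unchanged, that $N$ is still infinite and finitely generated (both obvious), and that $N$ is still subnormal in $\mathrm{GL}_n(L)$ rather than merely in $\mathrm{GL}_n(D)$. Subnormality is inherited under intersection with a subgroup, so $N = N\cap \mathrm{GL}_n(L)$ is subnormal in $\mathrm{GL}_n(L)$ — this is the key point and it is a standard fact about subnormal series. A secondary technical point is the identification of the conclusion $N\subseteq Z(L)^*$ with $N\subseteq F$: since $Z(L)$ is a field contained in $D$ and commuting with all of $S$, but $S$ generates $L$ and not necessarily all of $D$, one needs that scalar matrices in the center of $\mathrm{GL}_n(D)$ have entries in $F=Z(D)$; this follows because a scalar matrix $\alpha I$ centralizes $\mathrm{GL}_n(D)$ iff $\alpha\in Z(D)$. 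I would phrase the argument to land directly on $N\subseteq F^* I$, which by our identification convention is the desired $N\subseteq F$.
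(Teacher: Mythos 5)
Your opening move — collect the entries of the finitely many generators, let $L$ be the centrally finite division subring they generate, and work inside $\mathrm{GL}_n(L)$ — is the same reduction the paper makes, and intersecting the subnormal chain with $\mathrm{GL}_n(L)$ is unobjectionable. The genuine gap is at the end. What the cited machinery can give you over $L$ is $N\subseteq Z(\mathrm{GL}_n(L))=Z(L)^*I$, and this does \emph{not} yield $N\subseteq F$. Your parenthetical claim that $Z(L)\subseteq F$ is false in general: $L$ is generated by finitely many elements of $D$, and its center typically contains elements that are not central in $D$ (already for $D$ the rational quaternions and $L=\mathbb{Q}(i)$ one has $Z(L)=\mathbb{Q}(i)\not\subseteq\mathbb{Q}=F$; Step 6 of Theorem \ref{thm:3.1} is a computation in one specific example, not a general fact). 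Your fallback justification — that a scalar matrix $\alpha I$ centralizes $\mathrm{GL}_n(D)$ iff $\alpha\in Z(D)$ — is circular: it applies only once you know the elements of $N$ centralize all of $\mathrm{GL}_n(D)$, whereas the reduction only tells you they centralize $\mathrm{GL}_n(L)$. So after your argument you know only that $N$ is abelian (scalar over $Z(L)$), and you still need a reason why an abelian subnormal subgroup of $\mathrm{GL}_n(D)$, $n\geq 2$, must be central.

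That missing step is precisely where the paper's proof goes differently. It argues by contradiction: if $N$ is non-central, then by \cite[Theorem 11]{mah2} the infinite subnormal subgroup $N$ contains $\mathrm{SL}_n(D)$. This has two effects: it makes $N$ normal in $\mathrm{GL}_n(D)$, hence normal in $\mathrm{GL}_n(L)$, which is what legitimizes invoking \cite[Theorem 5]{akb3} (a statement about \emph{normal} subgroups, not subnormal ones — your proposal also overstates that citation, though \cite{mah1} would cover the subnormal case over a centrally finite ring); and once that theorem forces $N\subseteq Z(\mathrm{GL}_n(L))$, the inclusion $\mathrm{SL}_n(D)\subseteq N$ makes $\mathrm{SL}_n(D)$ abelian, a contradiction. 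Your proposal can be repaired by inserting this contradiction step (or any result showing abelian subnormal subgroups of $\mathrm{GL}_n(D)$, $n\geq 2$, are central), but as written the conclusion $N\subseteq F$ is not reached.
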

\begin{proof}
Suppose that $N$ is non-central. Then, by  \cite[Theorem 11]{mah2}, $\mathrm{SL}_n(D)\subseteq N$. So, $N$ is normal in  $\mathrm{GL}_n(D)$. Suppose that $N$ is generated by matrices $A_1, A_2, \ldots, A_k$ in $\mathrm{GL}_n(D)$ and $T$ is the set of all coefficients of all $A_j$. 
Since $D$ is weakly locally finite, the division subring $L$ generated by $T$ is centrally finite.  It follows that $N$ is a normal finitely generated subgroup of $\mathrm{GL}_n(L)$. By \cite[Theorem 5]{akb3}, $N\subseteq Z(\mathrm{GL}_n(L))$.
In particular,  $N$ is abelian and consequently, $\mathrm{SL}_n(D)$ is abelian, a contradiction. 
\end{proof}
\begin{lemma}\label{lem:8.1} 
Let $D$ be a division ring with center $F$. If $N$ is a subnormal subgroup of $D^*$, then $Z(N)=N\cap F$.
\end{lemma}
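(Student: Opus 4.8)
The plan is to establish the two inclusions $N\cap F\subseteq Z(N)$ and $Z(N)\subseteq N\cap F$ separately, the first being immediate and the second resting on the same result about abelian subnormal subgroups of $D^*$ that was already used in the proof of Theorem~\ref{thm:9.1}.

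For $N\cap F\subseteq Z(N)$: every element of $F=Z(D)$ commutes with all of $D$, in particular with all of $N$, so any element of $N$ lying in $F$ is central in $N$. No further argument is needed here.

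For the reverse inclusion $Z(N)\subseteq N\cap F$, the key observation is that $Z(N)$ is a \emph{characteristic} subgroup of $N$. Since $N$ is subnormal in $D^*$, there is a finite chain $N=N_0\trianglelefteq N_1\trianglelefteq\cdots\trianglelefteq N_k=D^*$; using repeatedly that a characteristic subgroup of a normal subgroup is normal, one climbs this chain to conclude that $Z(N)$ is itself a subnormal subgroup of $D^*$. As $Z(N)$ is abelian, \cite[14.4.4, p.~440]{scott} gives $Z(N)\subseteq Z(D)=F$, and combining this with the trivial inclusion $Z(N)\subseteq N$ yields $Z(N)\subseteq N\cap F$, completing the proof.

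I do not expect a serious obstacle; the only points requiring care are the (standard) fact that a characteristic subgroup of a subnormal subgroup is again subnormal, and the observation that $Z(N)$ is characteristic in $N$, both routine. An alternative route, should one wish to avoid \cite{scott}, is to fix $a\in Z(N)$, set $C=C_D(a)$, check that $C$ is a division subring of $D$ invariant under conjugation by every element of $N$ (using $an=na$ for $n\in N$), and then invoke the generalization of the Cartan--Brauer--Hua theorem to subnormal subgroups to force either $C\subseteq F$ or $C=D$: in the first case $N\subseteq F$ and the lemma is trivial, while in the second $a\in Z(D)=F$, so $a\in N\cap F$.
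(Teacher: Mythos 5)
Your proof is correct, but it takes a different route from the paper. The paper splits into cases: if $N\subseteq F$ there is nothing to prove, and if $N$ is non-central it invokes \cite[14.4.2, p.~439]{scott}, which gives $C_D(N)=F$, whence $Z(N)=N\cap C_D(N)\subseteq N\cap F$ at once. You instead observe that $Z(N)$ is an abelian subnormal subgroup of $D^*$ and apply \cite[14.4.4, p.~440]{scott} (the result the paper already uses in Theorems~\ref{thm:9.1} and~\ref{thm:8.2}) to get $Z(N)\subseteq Z(D)=F$; this avoids the case split and reuses a single Stuth-type theorem, while the paper's citation of the centralizer theorem makes the inclusion a one-liner. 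Both arguments rest on the same circle of generalizations of Cartan--Brauer--Hua to subnormal subgroups, so neither is more elementary; yours is marginally more self-contained relative to the rest of the paper. Two small remarks: your justification of subnormality via ``characteristic subgroup of a normal subgroup, applied repeatedly'' is both unnecessary and not literally repeatable ($Z(N)$ need not be characteristic in $N_1$); it suffices to note that $Z(N)\trianglelefteq N$, so prepending it to the chain $N\trianglelefteq N_1\trianglelefteq\cdots\trianglelefteq D^*$ already exhibits $Z(N)$ as subnormal in $D^*$. Also, in your sketched alternative via $C=C_D(a)$, the subnormal Cartan--Brauer--Hua theorem requires the acting subgroup to be non-central, so you would need the same trivial case $N\subseteq F$ that the paper disposes of first; as stated your dichotomy still covers it, but the hypothesis should be acknowledged.
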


\begin{proof} 
	If $N$ is contained in $F$, then there is nothing to prove. Thus, suppose that $N$ is non-central. By [9, 14.4.2, p. 439], $C_D(N)=F$. Hence $Z(N)\subseteq N\cap F$. Since the inclusion $N\cap F\subseteq Z(N)$ is obvious, $Z(N)= N\cap F$.
\end{proof}
\begin{lemma}\label{lem:8.2}
	If  $D$ is a weakly locally  finite division ring, then $Z(D')$ is a torsion group.
\end{lemma}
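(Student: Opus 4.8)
The goal is to show that every element of the abelian group $Z(D')$, where $D'=[D^*,D^*]$, has finite order. The plan is to localize the problem to a single element and then to a finitely generated, hence centrally finite, division subring, where the reduced norm (or a determinant after extension of scalars) finishes the job.

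First I would record that $D'$ is a normal, in particular subnormal, subgroup of $D^*$, so Lemma~\ref{lem:8.1} applies and gives $Z(D')=D'\cap F$, where $F=Z(D)$. Hence, fixing an arbitrary $x\in Z(D')$, we have $x\in F$ and, since $x\in D'$, we may write $x=\prod_{i=1}^{n}[a_i,b_i]$ as a finite product of commutators with $a_i,b_i\in D^*$. Let $L$ be the division subring of $D$ generated by the finite set $\{a_1,b_1,\dots,a_n,b_n\}$; by hypothesis $D$ is weakly locally finite, so $L$ is centrally finite. Put $k=Z(L)$ and $[L:k]=m^2$. Since $x\in F=Z(D)$, the element $x$ centralizes $L$ and lies in $L$, so $x\in L\cap F\subseteq Z(L)=k$, hence $x\in k^*$; and since each $a_i,b_i\in L^*$, we get $x\in[L^*,L^*]=L'$.

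The decisive step is then to pass to a splitting field of $L$: choosing a maximal subfield $E$ of $L$ (so $[E:k]=m$ and $L\otimes_kE\cong M_m(E)$), or simply extending scalars to $\overline k$, one obtains a group homomorphism $\rho\colon L^*\to{\rm GL}_m(E)$ carrying $k^*$ onto scalar matrices, $\rho(x)=xI$. Because $\det$ is a homomorphism into the abelian group $E^*$, it kills commutators, so $\rho(L')\subseteq{\rm SL}_m(E)$; evaluating $\det$ on $\rho(x)=xI$ gives $x^m=\det\rho(x)=1$. Thus $x$ is a root of unity, in particular of finite order. As $x\in Z(D')$ was arbitrary, $Z(D')$ is a torsion group.

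I expect the only genuinely content-bearing point to be this last step, namely the fact that a central scalar $x\in k^*$ lying in $L'$ must satisfy $x^m=1$ in a centrally finite division ring $L$ with $[L:k]=m^2$. This is essentially the standard statement $L'\subseteq{\rm SL}_1(L)$ together with ${\rm Nrd}_{L/k}(x)=x^m$ for $x\in k$, but one must be careful about the normalization of the reduced norm and about the fact that a maximal subfield splits $L$. To keep the argument elementary and self-contained I would present it via the determinant after base change to $\overline k$ (or to a maximal subfield), as sketched above, rather than invoking reduced norms formally; everything else is routine.
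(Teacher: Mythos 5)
Your proof is correct and follows essentially the same route as the paper: reduce via Lemma~\ref{lem:8.1} and weak local finiteness to a centrally finite subring $L$ containing $x$ in $Z(L)\cap L'$, then kill $x$ with a multiplicative norm-type map into an abelian group. The only (harmless) difference is that the paper applies the norm $N_{L/Z(L)}$ directly, getting $x^{[L:Z(L)]}=1$, whereas you pass to a splitting field and use the determinant (i.e.\ the reduced norm), getting the slightly sharper $x^{m}=1$ with $m^{2}=[L:Z(L)]$.
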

\begin{proof} By Lemma  \ref{lem:8.1}, $Z(D')=D'\cap F$. For any $x\in Z(D')$, there exists some positive integer $n$ and some $a_i, b_i\in D^*, 1\leq i\leq n$, such that 
	$$x=a_1b_1a_1^{-1}b_1^{-1}a_2b_2a_2^{-1}b_2^{-1}\ldots a_nb_na_n^{-1}b_n^{-1}.$$
	Set $S:=\{a_i,b_i\mid 1\leq i\leq n\}$.
	Since $D$ is weakly locally finite, the division subring $L$ of $D$ generated by $S$ is centrally finite.
	Put $n=[L:Z(L)].$ 
	Since $x\in F$, $x$ commutes with every element of $S$. Therefore,  $x$ commutes with every element of  $L$, and consequently,  $x\in Z(L)$. So,  
	$$x^n=N_{L/Z(L)}(x)=N_{L/Z(L)}(a_1b_1a_1^{-1}b_1^{-1}a_2b_2a_2^{-1}b_2^{-1}\ldots a_nb_na_n^{-1}b_n^{-1})=1.$$ 
	Thus, $x$ is torsion.
\end{proof}
\begin{lemma}\label{lem:9.1}
	Let $D$ be a division ring and $n\geq 1$. Then, $Z(\mathrm{SL}_n(D))$ is a torsion group if and only if $Z(D')$ is  a torsion group.
\end{lemma}
\begin{proof}
The case $n=1$ is clear. So, we can assume that $n\geq 2$. Denote by $F$ the center of $D$ and by $I_n$ the identity matrix of degree $n$. By \cite[\S21, Theorem 1, p.140]{dra}, 
$$Z(\mathrm{SL}_n(D))=\big\{ dI_n \vert d\in F^* \text{ and } d^n\in D'\big\}.$$ 
If $Z(\mathrm{SL}_n(D))$ is a torsion group, then, for any $d\in Z(D')=D'\cap F$, $dI_n\in Z(\mathrm{SL}_n(D))$. It follows that $d$ is torsion.
Conversely, if $Z(D')$ is a torsion group, then, for any $A\in Z(\mathrm{SL}_n(D))$, $A =dI_n$ for some $d\in F^*$ such that $d^n\in D'$. It follows that $d^n$ is torsion. Therefore, $A$ is torsion.
\end{proof}

\begin{theorem}\label{thm:9.3}
Let $D$ be a non-commutative algebraic, weakly locally finite division ring with center $F$ and  $N$ be a subgroup of $\mathrm{GL}_n(D)$ containing $F^*, n\geq 1$.  Then $N$ is not finitely generated.
\end{theorem}
\begin{proof} Recall  that if a division ring $D$ is weakly locally finite, then $Z(D')$ is a torsion group (see  Lemma \ref{lem:8.2}). Therefore, by Lemma \ref{lem:9.1}, $Z(\mathrm{SL}_n(D))$ is a torsion group.
	
Suppose that there is  a finitely generated subgroup $N$ of $\mathrm{GL}_n(D)$ containing $F^*$. Clearly $N/N'$ is a finitely generated abelian group, where $N'$ denotes the derived subgroup of $N$. Then, in virtue of \cite[5.5.8, p. 113]{scott}, $F^*N'/N'$ is a finitely generated abelian group.\\
	
\noindent{\em Case 1: $\mathrm{char}(D)=0$.}
	
Then, $F$ contains the field $\Q$ of rational numbers and it follows that $\Q^*I/(\Q^*I\cap N')\cong \Q^*N'/N'$. Since   $F^*N'/N'$ is finitely generated abelian subgroup, $\Q^*N'/N'$ is finitely generated too, and consequently   $\Q^*I/(\Q^*I\cap N')$ is finitely generated. Consider an arbitrary $A\in \Q^*I\cap N'$. Then $A\in F^*I\cap \mathrm{SL}_n(D)\subseteq Z(\mathrm{SL}_n(D))$. 
Therefore $A$ is torsion.
Since $A\in \Q^*I$, we have $A=dI$ for some $d\in \Q^*$. It follows that $d=\pm{1}$. Thus, $\Q^*I\cap N'$ is finite. Since $\Q^*I/(\Q^*I\cap N')$ is finitely generated, $\Q^*I$ is finitely generated. Therefore $\Q^*$ is finitely generated, that  is impossible.\\
	
\noindent{\em Case 2: $\mathrm{char}(D)=p > 0$.}
	
Denote by $\F_p$ the prime subfield of $F$, we shall prove that $F$ is algebraic over $\F_p$. In fact, suppose that $u\in F$ and $u$  is transcendental over $\F_p$. Put $K:=\F_p(u)$, then the group $K^*I/(K^*I\cap N')$ considered as a subgroup of $F^*N'/N'$ is finitely generated. Considering an arbitrary $A\in K^*I\cap N'$, we have $A=(f(u)/g(u))I$ for some $f(X), g(X)\in \F_p[X], ((f(X), g(X))=1$ and $g(u)\neq 0$. As mentioned above, we have $f(u)^s/g(u)^s=1$ for some positive integer $s$. Since $u$ is transcendental over $\F_p$,  $f(u)/g(u)\in \F_p$. Therefore, $K^*I\cap N'$ is finite and consequently, $K^*I$ is finitely generated. It follows that $K^*$ is finitely generated, hence $K$ is finite. Hence $F$ is algebraic over $\F_p$ and it follows that $D$ is algebraic over $\F_p$. Now, in virtue of Jacobson's Theorem  \cite[(13.11), p. 208]{lam},  $D$ is commutative, a contradiction. 
\end{proof}

\begin{corollary}\label{cor:9.1}
Let $D$ be an algebraic, weakly locally finite division ring. If the group $\mathrm{GL}_n(D), n\geq 1$, is finitely generated, then $D$ is commutative.
\end{corollary}

If $M$ is a maximal  finitely generated subgroup of $\mathrm{GL}_n(D)$, then $\mathrm{GL}_n(D)$ is finitely generated. So, the next result follows  immediately from  Corollary \ref{cor:9.1}.

\begin{corollary}\label{cor:9.2}
Let $D$ be an algebraic, weakly locally finite division ring. If the group $\mathrm{GL}_n(D), n\geq 1$,  has a maximal  finitely generated subgroup, then $D$ is commutative.
\end{corollary}

By the same way as in the proof of Theorem \ref{thm:9.3}, we obtain the following corollary.

\begin{corollary}\label{cor:9.3}
Let $D$ be a non-commutative  algebraic, weakly locally finite division ring with center $F$ and $S$ is a  subgroup of $\mathrm{GL}_n(D)$. If $N=F^*S$, then $N/N'$ is not finitely generated.
\end{corollary}

\begin{proof}
Suppose that $N/N'$ is finitely generated. Since $N'=S'$ and $ F^*I/(F^*I\cap S') \cong F^*S'/S'$, $F^*I/(F^*I\cap S')$ is a finitely generated abelian group.  Now, by the same arguments as in the proof of Theorem \ref{thm:9.3}, we conclude that $D$ is commutative.
\end{proof}

\begin{corollary}\label{cor:9.4}
Let $D$ be a non-commutative algebraic, weakly locally finite division ring. Then, $D^*$ is not finitely generated.
\end{corollary}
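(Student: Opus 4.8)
The plan is to obtain this as an immediate specialization of Theorem~\ref{thm:9.3}. Take $n=1$, so that ${\rm GL}_1(D)=D^*$, and put $N=D^*$. Since $F=Z(D)$, the subgroup $N=D^*$ obviously contains $F^*$, and $D$ is a non-commutative algebraic weakly locally finite division ring by hypothesis. Thus Theorem~\ref{thm:9.3} applies verbatim and yields that $N=D^*$ is not finitely generated, which is exactly the assertion.

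Alternatively, one can deduce it from Corollary~\ref{cor:9.3} by taking $S=D^*$: then $N=F^*S=D^*$, and the corollary gives that $N/N'=D^*/D'$ is not finitely generated; since a quotient of a finitely generated group is finitely generated, $D^*$ cannot be finitely generated either. Either route is essentially bookkeeping, and the substantive argument — splitting into the cases $\mathrm{char}(D)=0$ and $\mathrm{char}(D)=p>0$, using that $Z({\rm SL}_n(D))$ is torsion (via Lemma~\ref{lem:8.2} and Lemma~\ref{lem:9.1}), reducing to the non-finite generation of $\Q^*$ or of $\F_p(u)^*$, and invoking Jacobson's theorem in positive characteristic — has already been carried out in Theorem~\ref{thm:9.3}. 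Consequently there is no real obstacle here: the only thing to check is that the hypotheses of Theorem~\ref{thm:9.3} (or Corollary~\ref{cor:9.3}) are met, and they are met trivially with $n=1$.
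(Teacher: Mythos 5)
Your proposal is correct, and your first route is in fact more direct than the paper's. The paper does not invoke Theorem~\ref{thm:9.3} at $n=1$; instead it applies Corollary~\ref{cor:9.3} with $N=S={\rm GL}_n(D)$, notes $[{\rm GL}_n(D),{\rm GL}_n(D)]={\rm SL}_n(D)$, and concludes via the quotient ${\rm GL}_n(D)/{\rm SL}_n(D)$ (which it writes as $D^*$, though via the Dieudonn\'e determinant this quotient is really $D^*/D'$; the conclusion survives because a quotient of a finitely generated group is finitely generated). Your primary argument --- take $n=1$, so ${\rm GL}_1(D)=D^*$, and apply Theorem~\ref{thm:9.3} to $N=D^*\supseteq F^*$ --- bypasses that identification entirely and needs no remark about abelianizations, so it is the cleaner specialization; your alternative route through Corollary~\ref{cor:9.3} with $S=D^*$ is essentially the paper's argument, stated more carefully, since you explicitly pass from non-finite-generation of $N/N'=D^*/D'$ back to $D^*$. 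Both deductions are valid, and the hypotheses of the cited results are indeed met trivially.
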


\begin{proof} Take $N=S=\mathrm{GL}_n(D)$ in Corollary \ref{cor:9.3} and have in mind that 
$$[\mathrm{GL}_n(D), \mathrm{GL}_n(D)]=\mathrm{SL}_n(D),$$ we see that $D^*\cong \mathrm{GL}_n(D)/\mathrm{SL}_n(D)$ is not finitely generated.
\end{proof}
  
\section{Herstein's conjecture for weakly locally finite division rings}

Let $K \varsubsetneq D$ be a pair of division rings. Recall that an element $x\in D$ is {\em radical} over $K$ if there exists some positive integer $n(x)$ depending on $x$ such that $x^{n(x)}\in K$. A subset $S$ of $D$ is {\em radical} over $K$ if every element from $S$ is radical over $K$. In 1978, I.N. Herstein  \cite[Conjecture 3]{her} conjectured that given a subnormal subgroup $N$ of $D^*$, if $N$ is radical over center $F$ of $D$, then $N$ is central, i. e. $N$ is contained in $F$. Herstein, himself in the cited above paper proved this fact for the special case, when $N$ is torsion group. However, the problem remains  still open in general. In \cite{hai-huynh}, it was proved that  this  conjecture is  true in the finite dimensional case. Here, we  shall prove that this conjecture  is also true for weakly locally finite division rings. 

In \cite[Theorem 1]{her}, Herstein proved that if in a division ring $D$ every multiplicative commutator $aba^{-1}b^{-1}$ is torsion, then $D$ is commutative. Further, with the assumption that $D$ is a finite dimensional vector space over its center $F$, he proved \cite[Theorem 2]{her} that, if every multiplicative commutator  in $D$ is radical over $F$, then $D$ is commutative. Now, using Lemma \ref{lem:8.2}, we can carry over  the last fact for weakly locally finite division rings.

\begin{theorem}\label{thm:8.1}
Let $D$ be a weakly locally  finite division ring with center $F$. If every multiplicative commutator in $D$ is radical over $F$, then  $D$ is commutative.
\end{theorem}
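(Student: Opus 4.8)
The plan is to reduce the statement to the classical finite-dimensional case of Herstein's theorem, \cite[Theorem 2]{her}, by the same localization device that was used throughout the paper. Let $D$ be weakly locally finite with center $F$, and assume every multiplicative commutator $aba^{-1}b^{-1}$ is radical over $F$. I want to show $D' \subseteq F$, which forces every commutator to be both radical over $F$ and, by Lemma~\ref{lem:8.2}, torsion; then Herstein's commutator theorem \cite[Theorem 1]{her} (or directly the torsion case he settled) gives commutativity.

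First I would take two arbitrary elements $a, b \in D^*$ and let $L$ be the division subring generated by $\{a,b\}$; since $D$ is weakly locally finite, $L$ is centrally finite, say with center $Z(L)$ and $[L:Z(L)] = m < \infty$. The issue is that "radical over $F$" is a condition relative to the center of the big ring $D$, not relative to $Z(L)$, so I cannot immediately quote \cite[Theorem 2]{her} for $L$. To bridge this, I would argue that a multiplicative commutator $c = xyx^{-1}y^{-1}$ with $x,y \in L$ lies in $L' \subseteq [D',D'] \subseteq D'$; by hypothesis $c^{n(c)} \in F$ for some $n(c)$, and since $c \in L$ we get $c^{n(c)} \in L \cap F \subseteq Z(L)$. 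Hence every multiplicative commutator of $L$ is radical over the center $Z(L)$ of $L$. Now $L$ is finite-dimensional over $Z(L)$, so \cite[Theorem 2]{her} applies to $L$ and yields that $L$ is commutative. In particular $ab = ba$. Since $a,b \in D^*$ were arbitrary, $D$ is commutative.

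Actually this argument shows something even cleaner and avoids Lemma~\ref{lem:8.2} entirely: for any two elements the generated division subring is commutative, hence $D$ itself is commutative. The only step that requires care is verifying that the radical condition descends from $F$ to $Z(L)$, i.e. that $c \in L$ and $c^{n} \in F$ together give $c^{n} \in Z(L)$ — this is immediate because $c^n \in L$ and $c^n \in F = Z(D)$ commutes with everything in $L$, so $c^n \in Z(L)$. I expect this to be the main (though modest) obstacle: being careful that "radical over $F$" for elements already living in $L$ is genuinely a statement about $L$ relative to its own center, so that the finite-dimensional theorem of Herstein can be invoked verbatim. The rest is the routine weak-local-finiteness localization that has appeared repeatedly above (cf.\ the proofs of Theorem~\ref{thm:2.2} and Lemma~\ref{lem:8.2}).
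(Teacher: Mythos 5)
Your argument is correct, but it takes a genuinely different route from the paper. The paper argues globally: for each commutator $c=aba^{-1}b^{-1}$ the hypothesis gives $c^{n}\in F$, hence $c^{n}\in F\cap D'=Z(D')$, which is a torsion group by Lemma~\ref{lem:8.2} (itself proved via a reduced-norm computation inside a centrally finite subring), so every multiplicative commutator is torsion and Herstein's Theorem~1 of \cite{her} --- which needs no finiteness assumption --- yields commutativity. You instead localize: for arbitrary $a,b$ the division subring $L=\langle a,b\rangle$ is centrally finite, the radical condition descends from $F$ to $Z(L)$ because $L\cap F\subseteq Z(L)$ (elements of $F$ commute with all of $L$), and then Herstein's finite-dimensional Theorem~2 of \cite{her} applies verbatim to $L$, so $L$ is commutative and in particular $ab=ba$. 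Each step of yours checks out, including the key descent $c^{n}\in L\cap F\subseteq Z(L)$ and the fact that commutators of elements of $L$ are commutators of $D$. What the two approaches buy: yours is more self-contained at this point --- it bypasses Lemma~\ref{lem:8.2} and the torsion detour entirely --- but it invokes the stronger finite-dimensional radical-commutator theorem (Theorem~2) rather than the torsion-commutator theorem (Theorem~1); the paper's choice is economical in context because Lemma~\ref{lem:8.2} is needed anyway for Theorem~\ref{thm:8.2}, so the torsion reduction comes essentially for free.
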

\begin{proof} For any $a, b\in D^*$, there exists a positive integer $n=n_{ab}$ depending on $a$ and $b$ such that $(aba^{-1}b^{-1})^n\in F.$ Hence, by  Lemma  \ref{lem:8.2}, it follows that $aba^{-1}b^{-1}$ is torsion. Now, by \cite[Theorem 1]{her}, $D$ is commutative.
\end{proof}

The following theorem gives the affirmative answer to Conjecture 3 in \cite{her} for  weakly locally finite division rings.

\begin{theorem}\label{thm:8.2}
Let $D$ be a weakly locally  finite division ring with center $F$ and  $N$ be a  subnormal subgroup of  $D^*$. If  $N$ is radical over $F$, then $N$ is central, i.e. $N$ is contained in $F$.
\end{theorem}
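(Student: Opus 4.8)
The plan is to reduce the general subnormal case to the already-established commutator version (Theorem~\ref{thm:8.1}) by a localization-to-a-centrally-finite-subring argument, exactly as in the proof of Lemma~\ref{lem:8.2}, combined with the known finite-dimensional case of Herstein's conjecture from \cite{hai-huynh}. First I would dispose of the trivial case $N\subseteq F$, so assume $N$ is non-central. I would like to show every multiplicative commutator of $D$ is radical over $F$ and then invoke Theorem~\ref{thm:8.1} to conclude $D$ is commutative, contradicting non-centrality of $N$; but this is too strong as stated, so instead the argument must stay local. The key observation is that for any fixed finitely many elements of $N$, the division subring $L$ they generate is centrally finite by weak local finiteness, and $N\cap L$ is again subnormal in $L^*$ (a subnormal subgroup intersected with a division subring containing it is subnormal in that subring's unit group — this needs a short check, or one restricts to the subnormal series).

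The main steps, in order: (1) Reduce to showing that an arbitrary $x\in N$ lies in $F$. (2) Fix $x\in N$ and a subnormal series $N=N_0\triangleleft N_1\triangleleft\cdots\triangleleft N_r=D^*$; the goal is to produce a centrally finite division subring $L\subseteq D$ with $x\in L$, such that $x$ lies in a subnormal subgroup of $L^*$ that is radical over $Z(L)$. The natural candidate for generators of $L$: take $x$ together with finitely many commutators witnessing the subnormality relations and finitely many elements witnessing that $x^{n(x)}\in F$ commutes appropriately — but since subnormality is not finitely witnessed, the cleaner route is to first prove a lemma: \emph{in a weakly locally finite division ring, a subnormal subgroup $N$ of $D^*$ that is radical over $F$ satisfies $N\subseteq D'$ up to torsion, hence $N$ is torsion by Lemma~\ref{lem:8.2}.} (3) Concretely: for $x\in N$, radicality gives $x^m\in F$ for some $m$; I would show $x$ itself must then be torsion. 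Since $N$ is subnormal and $x^m$ is central, consider the subgroup generated by $x$ and all conjugates needed; apply weak local finiteness to get a centrally finite $L\ni x$ with the relevant subnormal/radical data inherited, then use the finite-dimensional result \cite{hai-huynh} (Herstein's conjecture in the centrally finite case) to conclude $x\in Z(L)$, and finally $x\in Z(D)=F$ by a centralizer argument as in Lemma~\ref{lem:8.1} (using $C_D(N)=F$ for non-central subnormal $N$, \cite{scott}, 14.4.2).

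Concretely the cleanest packaging: by Lemma~\ref{lem:8.1}, $Z(N)=N\cap F$; it suffices to show $N$ is abelian, since then $N\subseteq Z(D)=F$ by the standard fact (\cite{scott}, 14.4.4) that an abelian subnormal subgroup of $D^*$ is central. To show $N$ abelian, take $a,b\in N$; the commutator $c=aba^{-1}b^{-1}$ lies in $N$ (as $N$ is a subgroup), hence is radical over $F$, so $c^m\in F$ for some $m$. Let $L$ be the division subring generated by $\{a,b\}$: it is centrally finite, $c\in L$, and $c^m\in F\cap L\subseteq Z(L)$, so $c$ is radical over $Z(L)$. Since $L$ is centrally finite, Herstein's conjecture holds in $L$ — or more directly, by Lemma~\ref{lem:8.2} applied to $L$ together with the norm argument: $c\in Z(L)$ forces $c$ torsion, or we note $c$ is a commutator radical over $Z(L)$ in a finite-dimensional division algebra, hence by \cite[Theorem 2]{her} (or \cite{hai-huynh}) arguments $c$ is torsion. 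Then every commutator of $N$ is torsion; but in fact we want $N$ abelian, which is stronger. The honest main obstacle is precisely here: knowing every commutator from $N$ is torsion does not immediately give $N$ abelian. I expect the real argument routes through \cite{hai-huynh}: the centrally finite division subring $L$ generated by any two elements of $N$ together with enough of the subnormal series data has $N\cap L$ subnormal in $L^*$ and radical over $Z(L)$, so by the finite-dimensional case $N\cap L\subseteq Z(L)$, whence $a,b$ commute. Verifying that $N\cap L$ is genuinely subnormal in $L^*$ (not just that $N$ is subnormal in $D^*$) — i.e. choosing the generators of $L$ to also absorb witnesses of each inclusion $N_i\triangleleft N_{i+1}$ restricted to $L$ — is the technical crux, and is what the proof below must carefully arrange.
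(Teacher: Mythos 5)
Your final packaging is correct, but it is a genuinely different route from the paper's. You localize: for $a,b\in N$ take the centrally finite division subring $L$ generated by $\{a,b\}$, note $F\cap L\subseteq Z(L)$ so $N\cap L^*$ is radical over $Z(L)$, invoke the finite-dimensional case of Herstein's conjecture from \cite{hai-huynh} to get $N\cap L^*\subseteq Z(L)$, conclude $N$ is abelian, and finish with \cite[14.4.4]{scott}. The step you flag as the ``technical crux'' is in fact immediate and needs no augmentation of the generators of $L$: intersecting a subnormal series $N=N_1\vartriangleleft\cdots\vartriangleleft N_r=D^*$ with $L^*$ gives $N_i\cap L^*\vartriangleleft N_{i+1}\cap L^*$, so $N\cap L^*$ is subnormal in $L^*$ --- this is exactly the device the paper itself uses (for a different statement) in the proof of Lemma \ref{lem:8.4}, so your worry about ``absorbing witnesses'' is unnecessary. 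The paper's own proof of Theorem \ref{thm:8.2} proceeds quite differently: it passes to the derived subgroup $N'=[N,N]\subseteq D'$, uses radicality and Lemma \ref{lem:8.1} to place $n$-th powers of elements of $N'$ in $Z(D')$, applies Lemma \ref{lem:8.2} (the reduced-norm argument) to conclude $N'$ is a torsion subnormal subgroup, invokes Herstein's own theorem \cite[Theorem 8]{her} to get $N'\subseteq F$, and then concludes $N$ is solvable, hence central by \cite[14.4.4]{scott}. What each buys: the paper's argument needs only Herstein's torsion result plus its own norm lemma, avoiding any appeal to the full finite-dimensional conjecture; yours is shorter and more uniform (the same localization pattern as Lemma \ref{lem:8.4}) but leans on the external result of \cite{hai-huynh}. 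Your intermediate detours (trying to feed commutators into Theorem \ref{thm:8.1}, or arguing only that commutators of $N$ are torsion) are indeed dead ends, as you correctly recognized before settling on the working argument.
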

\begin{proof}
Consider the subgroup  $N'=[N,N]\subseteq D'$ and suppose that $x\in N'$. Since $N$ is radical over $F$, there exists some positive integer  $n$ such that $x^n\in F$. Hence $x^n \in F\cap D'=Z(D')$. By Lemma  \ref{lem:8.2},  $x^n$ is torsion, and consequently, $x$ is torsion too. Moreover, since  $N$ is subnormal in $D^*$, so is $N'$. Hence, by  \cite[Theorem 8]{her}, $N'\subseteq F$. Thus, $N$ is solvable, and by \cite[14.4.4, p. 440]{scott}, $N\subseteq F$.
\end{proof}

In Herstein's conjecture a subgroup $N$ is required to be radical over center $F$ of $D$. What happen if $N$ is required to be radical over some proper division subring of $D$ (which not necessarily coincides with $F$)? In the other words, the following question should be interesting: {\em ``Let $D$ be a division ring and $K$ be a proper division subring of $D$ and given a subnormal subgroup $N$ of $D^*$. If $N$ is radical over $K$, then is it contained in center $F$ of $D$?"} In the following we give the affirmative answer to this question for a weakly locally finite ring $D$ and a normal subgroup $N$. 

\begin{lemma}\label{lem:8.4} 
Let $D$ be a weakly locally  finite division ring with center $F$ and $N$ be a subnormal subgroup of $D^*$. If for every elements $x, y\in N$, there exists some positive integer $n_{xy}$ such that $x^{n_{xy}}y=yx^{n_{xy}}$, then $N\subseteq F$.
\end{lemma}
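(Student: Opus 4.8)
The plan is to reduce the hypothesis to the situation already handled by Theorem \ref{thm:8.2}, namely to show that $N$ is in fact radical over $F$, and then invoke that theorem. First I would fix an element $x\in N$ and consider the subgroup $C_N(x)=\{y\in N\mid xy=yx\}$; the hypothesis says precisely that $N=\bigcup_{n\ge 1}\{y\in N\mid x^n y=yx^n\}=\bigcup_{n\ge 1}C_N(x^n)$, i.e. every $y\in N$ centralizes some positive power of $x$. The aim is to deduce from this a single exponent $m=m(x)$ with $x^m\in Z(D)=F$, which is exactly radicality of $N$ over $F$. To get there, the natural move is to look inside a suitable centrally finite division subring: if $x,y\in N$ and $x^{n}$ commutes with $y$, then in the division subring $L$ generated by $x$ and $y$ (which is centrally finite by weak local finiteness), $x^n$ lies in the centralizer $C_L(y)$, and one wants to conclude that $x$ itself has a power landing in $Z(L)$, independently of $y$.

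The key technical step, and the one I expect to be the main obstacle, is to promote the family of "local" commutation relations $x^{n_{xy}}y=yx^{n_{xy}}$ to the global conclusion that $x$ is radical over $F$ with a bounded exponent. Here I would argue as follows. Since $N$ is subnormal in $D^*$, by the Stuart--Wiegold / Herstein type results (specifically \cite[14.4.4, p. 440]{scott} together with the torsion arguments already used in the proof of Theorem \ref{thm:8.2}), it suffices to show each $x\in N$ is radical over $F$; and for a fixed $x$, because $N$ is the ascending union of the subgroups $C_N(x^n)$, if any $C_N(x^n)$ were non-central in $D^*$ then the element $x^n$ would lie in its centralizer $C_D(C_N(x^n))$, which by \cite[14.4.2, p. 439]{scott} forces a contradiction with the maximality of the centralizer unless $x^n\in F$. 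If on the other hand $C_N(x^n)$ is central for some $n$, then $x^n$ centralizes the central — hence in particular non-trivial — subgroup $C_N(x^n)$, and one again pushes $x^n$ into $F$ using that the centralizer in $D$ of a non-central subnormal subgroup equals $F$. Either way a power of $x$ lies in $F$.

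Once it is established that $N$ is radical over $F$, the conclusion $N\subseteq F$ is immediate from Theorem \ref{thm:8.2}. I would therefore organize the write-up in three short steps: (i) rephrase the hypothesis as $N=\bigcup_n C_N(x^n)$ for every $x\in N$; (ii) show, using the centrally finite division subring generated by $\{x,y\}$ and the centralizer results from \cite{scott} (\S14.4), that for each $x\in N$ some power $x^{m}$ lies in $F$, i.e. $N$ is radical over $F$; (iii) apply Theorem \ref{thm:8.2}. The delicate point throughout is handling the case distinction in step (ii) — the dichotomy "$C_N(x^n)$ central versus non-central" — and making sure the centralizer $C_D(\,\cdot\,)=F$ identity from \cite[14.4.2]{scott} applies, which requires checking that the relevant subgroups are genuinely subnormal in $D^*$; this subnormality is inherited from $N$ but should be spelled out carefully.
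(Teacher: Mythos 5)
Your strategy---upgrade the hypothesis to ``$N$ is radical over $F$'' and then quote Theorem \ref{thm:8.2} for $D$ itself---has a genuine gap at its pivotal step. You want to apply \cite[14.4.2, p. 439]{scott} (the centralizer in $D$ of a non-central subnormal subgroup of $D^*$ equals $F$) to the subgroup $C_N(x^n)$, but $C_N(x^n)$ is just a subgroup of $N$, and subnormality in $D^*$ is not inherited by arbitrary subgroups of a subnormal subgroup; your own caveat that this ``should be spelled out carefully'' cannot be discharged. What is true is that $C_N(x^n)=N\cap C_{D^*}(x^n)$ is subnormal in $C_{D^*}(x^n)$, so at best you conclude $x^n\in Z\bigl(C_D(x^n)\bigr)$, not $x^n\in F$. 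The other branch of your dichotomy is also broken as written: if $C_N(x^n)$ is central, then the fact that $x^n$ centralizes it carries no information, and the theorem about non-central subnormal subgroups you invoke there simply does not apply. Finally, even apart from these points, the hypothesis only provides an exponent $n_{xy}$ depending on \emph{both} $x$ and $y$; the identity $N=\bigcup_n C_N(x^n)$ gives no single $n$ with $C_N(x^n)=N$, and even a power of $x$ commuting with all of $N$ would only land in $C_D(N)$, so the claimed radicality of $N$ over $F$ is never actually established and the reduction collapses.

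The paper's proof avoids any global radicality over $F$. Fix $x,y\in N$ and let $K$ be the division subring of $D$ generated by $x$ and $y$; by weak local finiteness $K$ is centrally finite. Intersecting a subnormal series $N=N_1\vartriangleleft\cdots\vartriangleleft N_r=D^*$ with $K^*$ shows that $M_1=N\cap K^*$ is subnormal in $K^*$. For any $a\in M_1$ the hypothesis gives exponents $n_{ax},n_{ay}$, and with $n=n_{ax}n_{ay}$ the power $a^n$ commutes with both generators $x$ and $y$, hence with all of $K$, so $a^n\in Z(K)$. Thus $M_1$ is radical over $Z(K)$, and Theorem \ref{thm:8.2} applied to $K$ (not to $D$) yields $M_1\subseteq Z(K)$; in particular $xy=yx$. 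Since the pair was arbitrary, $N$ is abelian, and then $N\subseteq F$ by \cite[14.4.4, p. 440]{scott}. The point you are missing is that radicality only needs to be achieved over the center of the two-generator subring, where commuting with the two generators suffices---this is exactly where the weakly locally finite hypothesis enters, whereas in your outline it plays no effective role.
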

\begin{proof} 
Since $N$ is subnormal in $D^*$, there exists the following series of subgroups
$$N=N_1\vartriangleleft N_2\vartriangleleft\ldots\vartriangleleft N_r=D^*.$$
Suppose that $x, y\in N$. Let $K$ be the division subring of $D$ generated by $x$ and $y$. 
Then, $K$ is centrally finite.
By putting $M_i=K\cap N_i, \forall i\in\{1, \ldots, r\}$ we obtain the following series of subgroups
$$M_1\vartriangleleft M_2\vartriangleleft\ldots\vartriangleleft M_r=K^*.$$
For any $a\in M_1\leq N_1=N$, suppose that $n_{ax}$ and $n_{ay}$ are positive integers such that
$a^{n_{ax}}x=xa^{n_{ax}}$ and $a^{n_{ay}}y=ya^{n_{ay}}.$
Then, for $n:=n_{ax}n_{ay}$ we have 
$a^n=$ $(a^{n_{ax}})^{n_{ay}}$ $=(xa^{n_{ax}}x^{-1})^{n_{ay}}$ $=xa^{n_{ax}n_{ay}}x^{-1}$ $=xa^nx^{-1},$
and
$a^n$ $=(a^{n_{ay}})^{n_{ax}}$ $=(ya^{n_{ay}}y^{-1})^{n_{ax}}$ $=ya^{n_{ay}n_{ay}}y^{-1}$ $=ya^ny^{-1}.$
Therefore $a^n\in Z(K)$. Hence $M_1$ is radical over $Z(K)$. By  Theorem \ref{thm:8.2}, $M_1\subseteq Z(K)$. In particular, $x$ and $y$  commute with each other. Consequently, $N$ is abelian group. By \cite[14.4.4, p. 440]{scott},  $N\subseteq F$.
\end{proof}
\begin{theorem}\label{thm:8.3} 
Let $D$ be a weakly locally  finite division ring with center $F$ and $K$ be a proper division subring of $D$. Then, every normal subgroup of $D^*$ which is radical over $K$ is contained in $F$.
\end{theorem}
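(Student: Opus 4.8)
The plan is to invoke Lemma~\ref{lem:8.4}, so that it suffices to prove that any two elements $x,y\in N$ satisfy $xy=yx$. If $N\subseteq F$ there is nothing to prove, so assume $N\not\subseteq F$ and work toward a contradiction. Since $N$ is normal in $D^{*}$, the division subring $\langle N\rangle$ of $D$ generated by $N$ is invariant under every inner automorphism of $D$; hence, by the Cartan--Brauer--Hua theorem, either $\langle N\rangle\subseteq F$ (impossible, since that would force $N\subseteq F$) or $\langle N\rangle=D$. Thus $\langle N\rangle=D$, and since $K\subsetneq D=\langle N\rangle$ we conclude $N\not\subseteq K$ and may fix an element $w_{0}\in N\setminus K$. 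This is precisely where the hypothesis that $K$ be \emph{proper} is used.

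Now fix $x,y\in N$ and let $L=\langle x,y,w_{0}\rangle$ be the division subring of $D$ they generate. Since $D$ is weakly locally finite, $L$ is centrally finite; write $Z$ for its center. Set $\widehat{N}=N\cap L$ and $\widehat{K}=K\cap L$. Then $\widehat{N}$ is a normal subgroup of $L^{*}$ (the intersection of the normal subgroup $N$ of $D^{*}$ with the subgroup $L^{*}$ of $D^{*}$); it is radical over $\widehat{K}$, because for $z\in\widehat{N}$ one has $z^{n(z)}\in K$ (as $z\in N$) and $z^{n(z)}\in L$ (as $z\in L$), hence $z^{n(z)}\in\widehat{K}$; and $\widehat{K}$ is a \emph{proper} division subring of $L$, since $w_{0}\in L\setminus K$. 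Thus the theorem reduces to its finite-dimensional case: \emph{a normal subgroup of the multiplicative group of a centrally finite division ring which is radical over a proper division subring is central.}

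I expect this finite-dimensional assertion to be the main obstacle. The approach I would take is to upgrade ``radical over $\widehat{K}$'' to ``radical over $Z$'' and then conclude by Theorem~\ref{thm:8.2}, which applies because centrally finite division rings are weakly locally finite. For the upgrade, given $z\in\widehat{N}$ one seeks an exponent $M$ with $(gzg^{-1})^{M}\in\widehat{K}$ for \emph{all} $g\in L^{*}$ simultaneously; then the entire $L^{*}$-conjugacy class of $z^{M}$ lies in $\widehat{K}$, so the division subring it generates is invariant under all inner automorphisms of $L$ and is contained in $\widehat{K}\subsetneq L$, whence by Cartan--Brauer--Hua it lies in $Z$, and in particular $z^{M}\in Z$. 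Doing this for every $z$ shows $\widehat{N}$ is radical over $Z$. The delicate point is producing such a uniform exponent along the conjugates of $z$: this is straightforward when $z$ has finite order (conjugate elements have equal order, which bounds the exponents involved, so one may even take $z^{M}=1$), and the genuine content is the general case, to be settled either by a dedicated argument in the centrally finite setting or in the spirit of the finite-dimensional results of \cite{hai-huynh}.

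Granting the finite-dimensional case, $\widehat{N}\subseteq Z$, so in particular $x,y\in Z$ and therefore $xy=yx$. As $x,y\in N$ were arbitrary, $N$ is abelian; being normal it is subnormal in $D^{*}$, so $N\subseteq F$ by \cite[14.4.4, p.~440]{scott}. This contradicts $N\not\subseteq F$, and hence $N\subseteq F$ in every case.
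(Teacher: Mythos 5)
There is a genuine gap at the heart of your argument. Your reduction itself is sound: fixing $w_{0}\in N\setminus K$ (which you obtain correctly via Cartan--Brauer--Hua applied to the division subring generated by the normal subgroup $N$), forming the centrally finite ring $L=\langle x,y,w_{0}\rangle$ and passing to $\widehat{N}=N\cap L$, $\widehat{K}=K\cap L$ is all fine. But the statement you reduce to --- \emph{a normal subgroup of the multiplicative group of a centrally finite division ring which is radical over a proper division subring is central} --- is precisely the hard content of the theorem, and you do not prove it. Your proposed upgrade from ``radical over $\widehat{K}$'' to ``radical over $Z(L)$'' needs, for each $z\in\widehat{N}$, a single exponent $M$ with $(gzg^{-1})^{M}\in\widehat{K}$ for \emph{every} $g\in L^{*}$; the radicality hypothesis only gives an exponent depending on each conjugate, elements of $N$ need not have finite order, and nothing in the centrally finite setting bounds these exponents. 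You concede exactly this point (``the delicate point \dots to be settled either by a dedicated argument \dots or in the spirit of \cite{hai-huynh}''), but \cite{hai-huynh} concerns radicality over the \emph{center}, not over an arbitrary proper division subring, so the citation does not close the gap. As written, the proposal is an unfinished reduction, not a proof.

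The paper avoids this obstacle entirely and never needs the finite-dimensional ``radical over a proper subring'' statement. It verifies directly the hypothesis of Lemma~\ref{lem:8.4} (which requires only power-commutativity $a^{n}b=ba^{n}$, not $ab=ba$): for $a\in N\cap K$ and $b\in N\setminus K$, assuming no power of $a$ commutes with $b$, one uses normality of $N$ to see that $x=(a+b)a(a+b)^{-1}$ and $y=(b+1)a(b+1)^{-1}$ lie in $N$, uses radicality over $K$ to put a common power $x^{m}=(a+b)a^{m}(a+b)^{-1}$, $y^{m}=(b+1)a^{m}(b+1)^{-1}$ into $K$, and then the identity $(x^{m}-y^{m})b=a^{m}(a-1)+y^{m}-x^{m}a$ forces either $b\in K$ or $a^{m}b=ba^{m}$, a contradiction; the remaining cases ($a,b\in K$, or $a\notin K$) reduce to this one. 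Lemma~\ref{lem:8.4} then yields $N\subseteq F$. If you want to salvage your outline, you would need to supply this kind of Herstein-style computation (which in fact works directly in $D$, making the passage to $L$ unnecessary for this step) rather than the uniform-exponent conjugacy-class argument.
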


\begin{proof} 
Assume that $N$ is a normal subgroup of $D^*$ which is radical over $K$, and $N$ is not contained in the center $F$. If $N\setminus K=\emptyset$, then $N\subseteq K$. By \cite[p. 433]{scott}, either $K\subseteq F$ or $K=D$. Since $K\neq D$ by the assertion, it follows that $K\subseteq F$. Hence $N\subseteq F$, that contradicts to the assertion. Thus, we have $N\setminus K\neq\emptyset$.

Now, to complete the proof of our theorem we shall show that the elements of $N$ satisfy the requirements of Lemma \ref{lem:8.4}. Thus, suppose that  $a, b\in N$. We examine the following cases:\\

\noindent{\em Case 1:}  $a\in K$.

\noindent{\em  Subcase 1.1:} $b\not\in K$. 

We shall prove that there exists some positive integer $n$ such that $a^nb=ba^n$. Thus, suppose that $a^nb\neq ba^n$ for any positive integer $n$. Then, $a+b\neq 0, a\neq \pm{1}$ and $b\neq \pm{1}$. So we have
$$x=(a+b)a(a+b)^{-1}, y=(b+1)a(b+1)^{-1}\in N.$$
Since $N$ is radical over $K$, we can find some positive integers $m_x$ and $m_y$ such that
$$x^{m_x}=(a+b)a^{m_x}(a+b)^{-1}, y^{m_y}=(b+1)a^{m_y}(b+1)^{-1}\in K.$$
Putting $m=m_xm_y$, we have
$$x^m=(a+b)a^m(a+b)^{-1}, y^m=(b+1)a^m(b+1)^{-1}\in K.$$
Direct calculations give the equalities
$$x^mb-y^mb+x^ma-y^m=x^m(a+b)-y^m(b+1)=(a+b)a^m-(b+1)a^m=a^m(a-1),$$
from that we get the following equality
$$(x^m-y^m)b=a^m(a-1)+y^m-x^ma.$$
If $(x^m-y^m)\neq 0$, then $b=(x^m-y^m)^{-1}[a(a^m-1)+y^m-x^ma]\in K$, that is a contradiction to the choice of $b$. Therefore $(x^m-y^m)= 0$ and consequently, $a^m(a-1)=y^m(a-1)$. Since $a\neq 1,a^m=y^m=(b+1)a^m(b+1)^{-1}$ and it follows that $a^mb=ba^m$, a contradiction.

\noindent{\em  Subcase 1.2:} $b\in K$. 

Consider an element $x\in N\setminus K$. Since $xb\not\in K$, by Subcase 1.1, there exist some positive integers $r, s$ such that 
$a^rxb=xba^r$ and $a^sx=xa^s.$
From these equalities it follows that
$a^{rs}=(xb)^{-1}a^{rs}(xb)=b^{-1}(x^{-1}a^{rs}x)b=b^{-1}a^{rs}b,$
and consequently, $a^{rs}b=ba^{rs}.$\\

\noindent {\em Case 2:}  $a\not\in K$.

Since $N$ is radical over $K$, there exists some positive integer $m$ such that $a^m\in K$.  By Case 1, there exists some positive integer $n$ such that $a^{mn}b=ba^{mn}$.
\end{proof}

\begin{acknowledgements}
The authors would like to thank the referee for his/her comments and suggestions. Also, they sincerely thank Adrian Wadsworth and John McConnell for the conversations by e-mail about McConnell's construction of examples of locally PI, but not PI division rings of arbitrary GK-dimension.
\end{acknowledgements}

\end{document}